\def\p{\mathbb{P}}
\def\e{\mathbb{E}}
\def\ind{\mbox{\rm 1\hspace{-0.04in}I}}
\theoremstyle{plain}
\newtheorem{theorem}{Theorem}
\newtheorem{lemma}{Lemma}
\newtheorem{corollary}{Corollary}
\newtheorem{proposition}{Proposition}
\theoremstyle{definition}
\newtheorem{remark}{Remark}
\numberwithin{equation}{section}
\title[L\'evy processes resurrected in the positive half-line]
{L\'evy processes resurrected in the positive half-line}
\author{Mar\'\i a Emilia Caballero}
\address{Mar\'\i a Emilia Caballero, Instituto de Matem\'aticas, Universidad Nacional Aut\'onoma de M\'exico. 
Distrito federal CP 04510, M\'exico.}
\email{mariaemica@gmail.com}
\author{Lo\"ic Chaumont}
\address{Lo\"ic Chaumont, Univ Angers, CNRS, LAREMA, SFR MATHSTIC, F-49000 Angers, France}
\email{loic.chaumont@univ-angers.fr}
\author{V{\'\i }ctor Rivero}
\address{V{\'\i }ctor Rivero, Centro de
Investigaci\'on en Matem\'aticas (CIMAT A.C.). Calle Jalisco s/n, 36240 Guanajuato, Guanajuato M\'exico. }
\email{rivero@cimat.mx}
\keywords{L\'evy process; killing; resurrecting; conservativeness; creeping; subordinator}
\subjclass[2010]{60G51}
\date{\today}
\begin{document}

\begin{abstract}
A L\'evy processes resurrected in the positive half-line is a Markov process obtained by removing 
successively all jumps that make it negative. A natural question, given this construction, is whether 
the resulting process is absorbed at 0 or not. We first describe the law of the resurrected process 
in terms of that of the initial L\'evy process. Then in many important classes of L\'evy processes,
we give conditions for absorption and conditions for non absorption bearing on the characteristics of the 
initial L\'evy process.
\end{abstract}

\maketitle

\section{Introduction}\label{sec:intro}

Let $X^{(1)}$ be a real L\'evy process starting from a nonnegative level. If $X^{(1)}$ becomes negative by 
a jump, then remove this jump and if it reaches 0 from above, then let the process be absorbed at 0. 
Then call $X^{(2)}$ the process thus obtained and apply to $X^{(2)}$ the same transformation as for $X^{(1)}$.
Call $X^{(3)}$ the new process, and so on. The process $Z$ obtained by repeating 
this procedure as long as $X^{(n)}$ crosses 0 by a jump is called the L\'evy process $X^{(1)}$ resurrected 
in the positive half-line. The level 0 is clearly absorbing for the resurrected process $Z$ and a natural 
question that will occupy much of this article is whether or not $Z$ hits 0 in a finite time.

The resurrected process $Z$ is actually  a special case of Markov process constructed by piecing out as first 
introduced by Ikeda, Nagasawa and Watanabe \cite{inw} and studied in more detail by Meyer \cite{me}. The problem 
of the finiteness of the lifetime (that is non conservativeness) of resurrected processes was first pointed out 
and studied in \cite{inw}, Proposition 4.3, see also Sato \cite{sa}, Theorem 4.5. These results are obtained in a 
{rather general setting but only allow us} to solve the case where the negative
half line is not regular for the L\'evy process $X^{(1)}$, see Proposition~\ref{infinite} below. Later, Bogdan, 
Burdzy and Chen \cite{cb} considered a similar question for multidimensional symmetric stable processes 
resurrected in open sets with finite Lebesgue measure. This work was then extended by Wagner \cite{wa} to any 
symmetric L\'evy process, see Theorem 2.6 therein. The papers \cite{cb} and \cite{wa} strongly bear on the 
symmetry of the process and the powerful tools provided by Dirichlet forms that can be used in this case. Then 
recently, Kim, Song and Vondra\v{c}ek \cite{ksv} tackled the problem of conservativeness for positive 
self-similar Markov processes resurrected in the positive half line. They provide a complete solution of the
problem in this case, which includes stable L\'evy processes, as a direct application of the Lamperti 
transformation, see Section \ref{5223} below. Finally, in the recent papers \cite{dtx} and \cite{li}, 
resurrection of non-increasing Lévy processes has been studied in the context of fractional derivatives, 
this is closely related to our Corollaries~\ref{3789} and \ref{3729}. 

The main objective of the present paper is to give conditions for conservativeness of a real L\'evy process 
resurrected in the positive half line. In the next section, we give a detailed definition of this process 
whose law is described in terms of that of the process killed when it reaches the negative half line. In 
particular, we specify the explicit form of the resurrection kernel. Then in Section \ref{6362} we prove that, 
when the initial L\'evy process $X^{(1)}$ creeps downward and satisfy certain additional condition, the 
resurrected process is absorbed at 0 with probability one, independently of its starting point. 
Some criteria for conservativeness (non absorption) are given in Section \ref{section:MC} and some criteria 
for absorption are given in Section \ref{8362}. This section actually contains the most delicate case, that is 
when $X^{(1)}$ enters immediately in the negative half line and drifts to $-\infty$. We give a sufficient 
condition for absorption in Theorem \ref{H-inf} but up to now, even when $X^{(1)}$ is the negative of a 
subordinator, we don't know whether this condition can be dropped or not. The stable case already mentioned 
above is treated in Section \ref{5223} and we address some perspectives in the last section.

We close this introduction by pointing out that even though we provide a rather large set of criteria to 
determine whether a resurrected process is conservative or not, there remain various open questions related 
to this. Thus, we see this paper as an invitation for a broader audience to work on the subject. {Many other interesting questions related to these processes remain to be solved, but the one treated here is a key one. Knowing when the process $Z$ is not conservative naturally invites {us} to study its recurrent extensions, i.e. the totality of processes $\widetilde{Z}$ for which $0$ is {a} regular and recurrent state, such that the process obtained by killing $\widetilde{Z}$ at its first hitting time of zero, has the same law as $Z.$ In this direction, some results are known in specific cases, see e.g. \cite{cb} for symmetric stable processes. The question is being studied by 
the authors in an ongoing research.}

\section{The resurrected process}\label{2456}

\subsection{Basic definition}\label{5221}
Let $X=(X_t)_{t\ge0}$ be any real L\'evy process starting from 0. The {\it resurrected L\'evy process} takes 
its name from the following recursive pathwise construction. Let $x\ge0$ and let $X^{(n)}$, $n\ge1,$ be the 
sequence of stochastic processes defined by $X^{(1)}=X+x$ and for $n\ge2$, if 
$\tau_{n-1}:=\inf\{t\ge0:X_t^{(n-1)}\le0\}<\infty$, 
then
\begin{equation}\label{8002}
X^{(n)}_t=
\left\{\begin{array}{ll}
X^{(n-1)}_t,\;\;&\mbox{if $t<\tau_{n-1}$},\\
X_t^{(n-1)}-{(X^{(n-1)}_{\tau_{n-1}}-X^{(n-1)}_{\tau_{n-1}-})},\;\;&\mbox{if $t\ge\tau_{n-1}$},
\end{array}\right.
\end{equation}
where $X_{0-}=0$ and $X^{(n)}=X^{(n-1)}$, if $\tau_{n-1}=\infty$. 
The process $X^{(n)}$ is obtained by removing from $X^{(n-1)}$ its first jump through 0. Note that if for 
some $n\ge1$, $X^{(n)}$ hits 0 continuously, that is $X_{\tau_{n}}=X_{\tau_{n}-}$, then 
$X^{(k)}=X^{(n)}$, for all $k\ge n$. Note also that $(\tau_{n})_{n\ge1}$ is a non decreasing sequence 
of random times and that $X^{(k)}_t=X^{(n)}_t$, for all $k\ge n$, whenever $t\le\tau_{n}$. 
This allows us to define for each $t\ge0$ the random variable 
$Z_t:=\lim_{n\rightarrow\infty}X^{(n)}_t\ind_{\{t<\tau_{n}\}}$. Then $Z=(Z_t)_{t\ge0}$ defines a c\`adl\`ag
stochastic process which is nonnegative, absorbed at 0 and satisfies $Z_0=x$, a.s. The first hitting time 
of 0 by $Z$ is obtained as the limit of the sequence {$(\tau_n, n\geq 0)$}, that is, 
\[\zeta:=\inf\{t:Z_t=0\}=\lim_{n\rightarrow\infty}\tau_{n}.\] 
We will also use the following more synthetic expression of $Z$, 
\begin{equation}\label{8633}
Z_t=\sum_{n\ge1} X^{(n)}_t\ind_{\{\tau_{n-1}\le t<\tau_{n}\}}\,,\;\;\;t\ge0\,,
\end{equation}
where we set $\tau_0=0$. Then the process $Z$ is called the resurrected L\'evy process $X$ starting from 
$x$.\\

This process is actually a special case of constructing a Markov process by piecing out, as described in 
\cite{inw} and \cite{me}. More specifically, let $\p_x$, $x\in\mathbb{R}$ be a family of probability measures 
under which $X$ is a L\'evy process such that $\p_x(X_0=x)=1$ and define,
\[\tau=\inf\{t\ge0:X_t\le0\}<\infty.\]
Then the law of the process $Z$ is obtained by resurrecting under $\p_x$, $x\ge0$, the killed L\'evy process
\begin{equation}\label{7322}
Y_t=X_t\ind_{\{t<\tau\}}\,,\;\;t\ge0\,,
\end{equation}
according to the resurrection kernel,
\begin{equation}\label{eq:res-kernel}
K(\omega,dy)=\delta_{X_{\tau-}(\omega)}(dy).
\end{equation}
(See Subsection \ref{5224} for more details.) From Theorem 1.1 in \cite{inw} and Th\'eor\`eme 1 in \cite{me}, 
$Z$ is a strong Markov process with state space $[0,\infty)$, in a filtration where $(\tau_{n}$, $n\ge1)$ are 
stopping times. Note that $Y=(Y_t)_{t\ge0}$ is also a strong Markov process with state space $[0,\infty)$ and 
0 as an absorbing state. We will keep the notation $(\p_x)_{x\in[0,\infty)}$ for the family of probability 
measures associated to the process $Y$, and  $(P_x)_{x\in[0,\infty)}$ will denote the family of probability 
measures associated to $Z$. From our construction, when $\zeta$ is finite, $Z$ reaches 0 continuously, that 
is, for all $x\ge0$,
\[Z_{\zeta-}=0\,,\;\;\mbox{$P_x$-a.s.~on the set $\{\zeta<\infty\}$}.\]
So, either $Z$ reaches $0$ continuously at a finite time or it never reaches 0. In all the cases treated 
here, these events have probability 0 or 1, independently of $x$. The major part of this paper is 
devoted to determine conditions on the characteristics of the L\'evy process $X$ for $\zeta$ to be finite.\\

\subsection{The distribution of the resurrected process}\label{5222}
Let us now describe the distribution of the process $Z$ in more detail. Note that $\tau_1=\tau$, $\p_x$-a.s.,
for all $x\ge0$. Then it follows from (\ref{8633}) and (\ref{7322}) that for all nonnegative measurable 
function $f$ such that $f(0)=0$, $f(Z_t)=f(Y_t)+\sum_{n\ge2} f(X^{(n)}_t)\ind_{\{\tau_{n-1}\le t<\tau_{n}\}}$, 
$\p_x$-a.s., for all $t\ge0$, so that $Y$ is a subprocess of $Z$ in the sense of part III.3 in \cite{bg}, that 
is $\e_x(f(Y_t))\le E_x(f(Z_t))$, for all $t\ge0$ and $x\ge0$. This implies the existence of a 
multiplicative functional $M=(M_t)_{t\ge0}$ of $Z$ such that for all $t$, $x$ and $f$ as above, 
\begin{equation}\label{2256}
\e_x(f(Y_t))=E_x(f(Z_t)M_t)\,,
\end{equation}
see Theorem 2.3, p.101 in \cite{bg}. It also suggests that the distribution of $Z_t$ can be expressed from 
the process $Y$, at least in a non formal way, as follows 
\begin{equation}\label{2257}
E_x(f(Z_t))=\e_x(f(Y_t)M^{-1}_t(Y))\,.
\end{equation} 
The aim of Theorem \ref{7366} below is to make the functional $M^{-1}_t(Y)$ explicit and to give a direct 
proof of identity (\ref{2257}).\\

For that end, we next quote a result describing the joint distribution of $(\tau, X_{\tau-})$ under $\p_x$, 
for $x>0$. This is more general than needed right now, but it will be handy all over our work.  We denote by 
$\pi$ the L\'evy measure of $X$ and we set $\bar{\pi}^{-}(x)=\pi((-\infty,x])$, $x\in\mathbb{R}$. We will 
also denote by
\begin{equation}\label{pot0}
\e_x\left(\ind_{\{X_s\in dy, s<\tau\}}\right)ds=U^{0}(x; ds,dy),\qquad s,x,y\geq 0,
\end{equation}
the potential measure, in time and space, of $X$ killed at its first passage time below $0$
(that is the process $Y$). By $U$ and $U^*$, we denote the renewal measure of the bivariate ascending, 
respectively descending, ladder time and height process associated to $X$, see Chap.~VI in \cite{be}. 
The Wiener-Hopf factorization in time and space ensures that for any non negative and measurable function $h$,
\begin{equation*}
\begin{split}
&\iint_{[0,\infty)\times(0,\infty)}U^{0}(x; ds, dy)h(s,y)\\
&=\int_{[0,\infty)\times[0,x)}U^{*}(ds, d\ell)\int_{(0,\infty)\times[0,\infty)}
U(du, dv)h(s+u, x-\ell+v)\ind_{\{x-\ell+v>0\}},
\end{split}
\end{equation*}
where $U^*(ds,dy)=\delta_{(0,0)}(ds,dy)$ (resp. $U(ds,dy)=\delta_{(0,0)}(ds,dy)$) if $X$ (resp. $-X$) is 
a subordinator.

\begin{lemma}\label{RK-det} The joint distribution of $(\tau, X_{\tau-})$ is characterized by the following 
identity, which holds for any Borel function $h:\mathbb{R}_{+}\times\mathbb{R}_{+}\mapsto\mathbb{R}_{+}$, 
\begin{equation}\label{3458}
\begin{split}
&\iint_{[0,\infty)\times[0,\infty)}\p_x\left(\tau\in d t, X_{\tau-}\in dy, \tau<\infty\right)h(t,y)\\
=&\int_{[0,\infty)}a^{*}u^{*}(ds, x)h(s,0)\\
&+ \iint_{[0,\infty)\times(0,\infty)}U^{0}(x; ds, dy)h(s,y)\overline{\pi}^{-}(-y),
\end{split}
\end{equation}
where $a^{*}$ is the drift coefficient of the descending ladder height process of $X$ and $u^{*}(ds, x)$ 
denotes the density in the spatial coordinate of $U^{*}(ds, d\ell)$, which exists when $a^{*}>0$. 

For all $t>0$, $x>0$ and for all positive and measurable function 
$f:\mathbb{R}_{+}\times\mathbb{R}_{+}\mapsto\mathbb{R}_{+}$, 
\begin{eqnarray}\label{non-creapingprob}
\e_x(f(X_{\tau-},\tau)\ind_{\{\tau\le t,\,X_{\tau}<X_{\tau-}\}})&=&
\int_0^t\e_x(f(X_s,s)\bar{\pi}^{-}(-X_s)\ind_{\{s\le\tau\}})\,ds\,,\label{3499}
\end{eqnarray}
and 
\begin{equation}
\p_x(\tau\le t,\,X_{\tau-}=0)=\int_{(0,t]}a^{*}u^{*}(ds, x).\label{3498}
\end{equation}

Moreover, if $X$ drifts to $\infty$, then
$$\p_x(\tau=\infty)=\kappa^{*}U([0,\infty)\times[0,x]),\qquad x>0;$$ if $X$ drifts to $-\infty$, then 
$$\e_x(\tau)=\kappa^{-1} U^{*}([0,\infty)\times[0,x]),\qquad x>0;$$ while, if $X$ oscillates, then
$$\p_x(\tau=\infty)=0\quad\mbox{and}\quad \e_x(\tau)=\infty,\quad\mbox{for all $x>0$.}$$ 
Here $\kappa$ and $\kappa^{*}$ are the killing rates of the upward and downward ladder height processes, 
respectively.
\end{lemma}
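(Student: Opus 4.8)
The plan is to establish the master identity (\ref{3458}) first, and then to read off (\ref{3499}), (\ref{3498}) and the three dichotomy statements as consequences. On $\{\tau<\infty\}$ the process reaches $(-\infty,0]$ in one of two mutually exclusive ways: either it creeps downward, so that $X_{\tau-}=X_\tau=0$, or it overshoots by a jump, so that $X_{\tau-}>0$ and $X_\tau<X_{\tau-}$. These two events are complementary up to a $\p_x$-null set (if $X_{\tau-}>0$ the passage cannot be continuous, while a jump landing exactly at $0$ is negligible), so that, once the two corresponding contributions are computed in the next two steps, their sum gives the two terms on the right-hand side of (\ref{3458}). Granting (\ref{3458}), identity (\ref{3499}) is obtained by choosing $h$ supported on $\{y>0\}$ and truncated at time $t$ (on $\{X_{\tau-}>0\}$ the indicator $\ind_{\{X_\tau<X_{\tau-}\}}$ is superfluous), and (\ref{3498}) by choosing $h(s,y)=\ind_{(0,t]}(s)\ind_{\{y=0\}}$, which selects the creeping term.

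For the jump contribution I would use the compensation formula for the Poisson point process of the jumps of $X$, whose intensity measure is $ds\,\pi(dx)$. On $\{s\le\tau\}$ the path has remained in $(0,\infty)$, so $X_{s-}$ is predictable and strictly positive, and a downward passage by a jump occurs at the first instant $s$ with $\Delta X_s<-X_{s-}$, at which $X_{\tau-}=X_{s-}$. Writing the left-hand side of (\ref{3499}) as an expectation of a sum over jumps and applying the compensation formula converts it into $\int_0^t\e_x(f(X_s,s)\bar{\pi}^{-}(-X_s)\ind_{\{s\le\tau\}})\,ds$; the only point requiring care is that a jump landing exactly at $0$ is $\p_x$-negligible, so that the open and closed left tails of $\pi$ coincide here. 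This proves (\ref{3499}); its differential form states that $\p_x(\tau\in ds,\,X_{\tau-}\in dy,\,X_\tau<X_{\tau-})=U^{0}(x;ds,dy)\,\bar{\pi}^{-}(-y)$ on $\{y>0\}$, by the definition (\ref{pot0}) of the killed potential $U^{0}$, which is exactly the second term of (\ref{3458}).

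The creeping contribution is the delicate step and I expect it to be the main obstacle. Since the downward passage of $X$ from $x$ is governed by the descending ladder height process, continuous passage across $0$ is possible only when this subordinator has a strictly positive drift $a^{*}$, and then it occurs at a rate equal to $a^{*}$ times the spatial density of its renewal measure at the crossed level. Concretely, the bivariate descending ladder renewal measure $U^{*}(ds,d\ell)$ admits, when $a^{*}>0$, a density $u^{*}(ds,\ell)$ in the height variable, and the classical creeping theorem, refined so as to retain the real time at which the level $x$ is crossed, gives $\p_x(\tau\le t,\,X_{\tau-}=0)=\int_{(0,t]}a^{*}u^{*}(ds,x)$, which is (\ref{3498}) and the first term of (\ref{3458}). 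Keeping track of the real time $\tau$ throughout this argument, rather than only of the creeping probability, is what makes this step the most demanding.

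Finally, the dichotomy follows by computing total masses. The event $\{\tau=\infty\}$ is exactly $\{\inf_{t\ge0}X_t>0\}$, and since $x-\inf_{t\ge0}X_t$ equals the total descending ladder height, whose law is $\kappa^{*}V^{*}(dy)$ with $V^{*}$ the spatial marginal of $U^{*}$, one obtains, when $X$ drifts to $+\infty$ (so that $\kappa^{*}>0$), $\p_x(\tau=\infty)=\kappa^{*}U^{*}([0,\infty)\times[0,x])$; the descending ladder renewal appears here precisely because it controls the overall infimum. When $X$ drifts to $-\infty$ we have $\tau<\infty$ a.s.\ and $\e_x(\tau)=U^{0}(x;[0,\infty)\times(0,\infty))$, since $X_s>0$ for $s<\tau$; evaluating the Wiener--Hopf factorisation quoted just before the lemma at $h\equiv1$ factorises this total mass as $U^{*}([0,\infty)\times[0,x])$ times the total mass $U((0,\infty)\times[0,\infty))=\kappa^{-1}$ of the ascending ladder renewal measure, giving $\e_x(\tau)=\kappa^{-1}U^{*}([0,\infty)\times[0,x])$. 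In the oscillating case $\kappa=\kappa^{*}=0$, so $\liminf_{t}X_t=-\infty$ forces $\p_x(\tau=\infty)=0$, while the same factorisation yields $\e_x(\tau)=\kappa^{-1}U^{*}([0,\infty)\times[0,x])=\infty$.
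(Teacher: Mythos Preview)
Your proposal is correct and matches the paper's approach: the paper's proof consists solely of citations---Theorem~3.1 of \cite{GM} for the creeping term, Lemma~11 of \cite{dr} (proved via exactly the compensation-formula argument you sketch) for the jump term and its extension (\ref{3499}), and Proposition~VI.17 of \cite{be} for the trichotomy---and your outline simply reproduces the content of those references rather than taking a different route. One remark: in the drift-to-$+\infty$ case you derive $\p_x(\tau=\infty)=\kappa^{*}U^{*}([0,\infty)\times[0,x])$, whereas the lemma as printed has $U$; your reasoning via the terminal value of the descending ladder height is sound and does give $U^{*}$, so the $U$ in the statement appears to be a misprint.
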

\begin{proof}
The proof of identity~\eqref{3458} can be found in Theorem 3.1 of \cite{GM}, for the creeping part
and in Lemma 11 of \cite{dr}, for the jump part. The proof of (\ref{3499}) follows that of Lemma 11 of \cite{dr} 
up to a slight extension from the case $f\equiv1$ to the general case. The rest of the identities comes from 
Proposition 17 in Chapter VI of~\cite{be}.
\end{proof}

\begin{theorem}\label{9366}
For all $x\ge0$, $t\ge0$ and for all positive measurable function $f$, 
\begin{equation}\label{7366}
E_x(f(Z_t)\ind_{\{t<\zeta\}})=\e_x\left(f(X_t)\exp\left(\int_0^t\bar{\pi}^-(-X_s)\,ds\right)\ind_{\{t<\tau\}}\right)\,.
\end{equation}
\end{theorem}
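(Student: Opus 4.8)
The plan is to identify both sides of \eqref{7366} with one and the same Neumann series. Introduce the function $\psi(t,x):=\e_x(f(X_t)\ind_{\{t<\tau\}})$ and, for nonnegative measurable $\phi$ on $\R_+\times\R_+$, the positive operator
\[
T\phi(t,x):=\int_0^t\e_x\bigl(\bar{\pi}^-(-X_s)\,\phi(t-s,X_s)\,\ind_{\{s<\tau\}}\bigr)\,ds .
\]
By identity \eqref{3499} of Lemma \ref{RK-det}, applied to $h(y,s)=\phi(t-s,y)$, this operator also admits the ``resurrection'' form
\[
T\phi(t,x)=\e_x\bigl(\phi(t-\tau,X_{\tau-})\,\ind_{\{\tau\le t,\,X_\tau<X_{\tau-}\}}\bigr).
\]
I would then prove separately that the left-hand side of \eqref{7366} equals $\sum_{k\ge0}T^k\psi(t,x)$ and that the right-hand side equals the same series; since every term is nonnegative, monotone convergence justifies all interchanges and the two identifications yield the theorem.

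For the left-hand side I would show by induction on $k$ that
\[
T^k\psi(t,x)=E_x\bigl(f(Z_t)\,\ind_{\{\tau_k\le t<\tau_{k+1}\}}\bigr),\qquad k\ge0 ,
\]
with the convention $\tau_0=0$. The case $k=0$ is immediate since $Z$ coincides with $X$ on $[0,\tau_1)=[0,\tau)$. For the inductive step I would combine the resurrection form of $T$ with the strong Markov property of $Z$ at the stopping time $\tau_1=\tau$: on the event $\{X_\tau<X_{\tau-}\}$ the process is resurrected at $X_{\tau-}$ and restarts as a fresh resurrected process whose $k$-th passage time is $\tau_{k+1}-\tau_1$, which turns $\ind_{\{\tau_k\le t-\tau<\tau_{k+1}\}}$ into $\ind_{\{\tau_{k+1}\le t<\tau_{k+2}\}}$. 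Since the events $\{\tau_k\le t<\tau_{k+1}\}$, $k\ge0$, partition $\{t<\zeta\}$ (the creeping event $\{X_{\tau-}=0\}$ being exactly absorption, hence excluded), summing over $k$ gives $\sum_{k\ge0}T^k\psi(t,x)=E_x(f(Z_t)\ind_{\{t<\zeta\}})$.

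For the right-hand side, write $A_t:=\int_0^t\bar{\pi}^-(-X_s)\,ds$ and show by induction that
\[
T^k\psi(t,x)=\e_x\Bigl(f(X_t)\,\tfrac{A_t^{\,k}}{k!}\,\ind_{\{t<\tau\}}\Bigr),\qquad k\ge0 .
\]
Here the Markov property of $X$ at time $s$ collapses $\e_{X_s}(f(X_{t-s})\,(A_{t-s})^k/k!\,\ind_{\{t-s<\tau\}})$ into $f(X_t)\,(A_t-A_s)^k/k!\,\ind_{\{t<\tau\}}$ under the conditional expectation, and the remaining $ds$-integral is handled by the elementary telescoping identity $\int_0^t\bar{\pi}^-(-X_s)\,(A_t-A_s)^k/k!\,ds=A_t^{\,k+1}/(k+1)!$, obtained from $\tfrac{d}{ds}(A_t-A_s)=-\bar{\pi}^-(-X_s)$. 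Summing over $k$ recognizes the exponential series, so $\sum_{k\ge0}T^k\psi(t,x)=\e_x(f(X_t)\,e^{A_t}\ind_{\{t<\tau\}})$, which is the right-hand side of \eqref{7366}. Comparing the two evaluations of $\sum_{k\ge0}T^k\psi$ proves the theorem.

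I expect the main obstacle to be the careful bookkeeping in the first induction: making the strong Markov property of $Z$ at the resurrection times $\tau_k$ interact correctly with the reversed form of $T$, matching the shifted passage times, and checking that the creeping/absorption event is consistently excluded by the indicators on both sides. The second induction is, by contrast, an essentially deterministic manipulation of the exponential functional once the Markov property of $X$ has been invoked.
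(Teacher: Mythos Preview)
Your proposal is correct and follows essentially the same route as the paper. The paper proves the single term-by-term identity
\[
(n-1)!\,E_x\bigl(f(Z_t)\ind_{\{\tau_{n-1}\le t<\tau_n\}}\bigr)=\e_x\Bigl(f(X_t)\Bigl(\int_0^t\bar{\pi}^-(-X_s)\,ds\Bigr)^{n-1}\ind_{\{t<\tau\}}\Bigr)
\]
by one induction that combines, in the same chain of equalities, the strong Markov property of $Z$ at $\tau_1$, identity \eqref{3499}, the induction hypothesis, the Markov property of the killed process, and the telescoping integral $\int_0^t\bar{\pi}^-(-X_s)(A_t-A_s)^{n-2}\,ds=A_t^{n-1}/(n-1)$. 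You do exactly the same computation but split it into two inductions by inserting the intermediary $T^k\psi$; this is purely organizational and the ingredients are identical.
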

\begin{proof} First note that identity (\ref{7366}) is trivial for $x=0$. Moreover, since {$(\tau_{n}, n\ge1)$} is 
a non decreasing sequence which satisfies $\zeta=\lim_{n\rightarrow\infty}\tau_{n}$, it suffices to show that for 
all $x>0$, $t\ge0$ and $n\ge1$, 
\begin{equation}\label{7334}
(n-1)!\,E_x\left(f(Z_t)\ind_{\{\tau_{n-1}\le t<\tau_{n}\}}\right)=
\e_x\left(f(X_t)\left(\int_0^t\bar{\pi}^-(-X_s)\,ds\right)^{n-1}\ind_{\{t<\tau\}}\right)\,.
\end{equation}
For $n=1$, the equality is trivial for all $x>0$ and $t\ge0$ (recall that $\tau_0=0$ and $\tau_1=\tau$, $\p_x$-a.s.). 
Then let us prove (\ref{7334}) by induction. Recall that {$(\tau_{n}, n\ge1)$} are stopping times in a 
filtration making $Z$ a strong Markov process. Moreover, they satisfy 
$\tau_{n}=\tau_1+\tau_{n-1}\circ\theta_{\tau_1}$.\\

Then let us fix $x>0$ and $n\ge2$. Assume that (\ref{7334}) holds for $n-1$ and for all $t>0$, 
and apply the strong Markov property at time $\tau_1$ in order to obtain,
\begin{eqnarray*}
&&E_x\left(f(Z_t)\ind_{\{\tau_{n-1}\le t<\tau_{n}\}}\right)=
E_x\left(f(Z_{t})\ind_{\{\tau_{n-1}\le t<\tau_{n},\,Z(\tau_{n-1})>0\}}\right)\\
&=&E_x\left(\ind_{\{\tau_1\le t,\,Z({\tau_1})>0\}}
\left(\ind_{\{\tau_{n-2}\le t-\tau_1<\tau_{n-1},\,
Z(\tau_{n-2})>0\}}f(Z_{t-\tau_1})\right)\circ\theta_{\tau_1}\right)\\
&=&E_x\left(\ind_{\{\tilde{\tau}_1\le t,\,\tilde{Z}({\tilde{\tau}_1})>0\}}
E_{\tilde{Z}({\tilde{\tau}_1})}\left(\ind_{\{\tau_{n-2}\le t-s<\tau_{n-1},\,
Z(\tau_{n-2})>0\}}f(Z_{t-s})\right)_{|{s=\tilde{\tau}_1}}\right),
\end{eqnarray*}
where in the last equality $(\tilde{\tau}_1,\tilde{Z}(\tilde{\tau}_1))$ is integrated under 
$P_x$ and has the same law as $(\tau_1,Z(\tau_1))$. Then by applying successively (\ref{3499}) in the 
second equality below and our induction hypothesis in the third one, we obtain,
\begin{eqnarray*}
&&E_x\left(\ind_{\{\tilde{\tau}_1\le t,\,\tilde{Z}({\tilde{\tau}_1})>0\}}
E_{\tilde{Z}({\tilde{\tau}_1})}\left(\ind_{\{\tau_{n-2}\le t-s<\tau_{n-1},\,
Z(\tau_{n-2})>0\}}f(Z_{t-s})\right)_{|{s=\tilde{\tau}_1}}\right)\\
&=&\e_x\left(\ind_{\{\tilde{\tau}\le t,\,\tilde{X}({\tilde{\tau}-})>0\}}
E_{\tilde{X}({\tilde{\tau}}-)}\left(\ind_{\{\tau_{n-2}\le t-s<\tau_{n-1},\,
Z(\tau_{n-2})>0\}}f(Z_{t-s})\right)_{|{s=\tilde{\tau}}}\right)\\
&=&\int_0^t\e_x\left(\bar{\pi}^{-}(-X_s)\ind_{\{s\le \tau\}}
E_{X_s}\left(\ind_{\{\tau_{n-2}\le t-s<\tau_{n-1},\,
Z(\tau_{n-2})>0\}}f(Z_{t-s})\right)\right)\,ds\\
&=&\frac1{(n-2)!}\int_0^t\e_x\left(\bar{\pi}^{-}(-X_s)\ind_{\{s\le\tau\}}\right.\\
&&\qquad\qquad\qquad\qquad\quad\times
\left.\e_{X_s}\left(f(X_{t-s})\left(\int_0^{t-s}\bar{\pi}^-(-X_u)\,du\right)^{n-2}
\ind_{\{t-s<\tau\}}\right)\right)ds\\
&=&\frac1{(n-2)!}\e_x\left(\int_{0}^t\bar{\pi}^-(-X_s)
\left(\int_s^{t}\bar{\pi}^-(-X_u)\,du\right)^{n-2}\,ds\,f(X_{t})\ind_{\{t<\tau\}}\right)\\
&=&\frac1{(n-1)!}\e_x\left(\left(\int_{0}^t
\bar{\pi}^-(-X_s)\,du\right)^{n-1}f(X_{t})\ind_{\{t<\tau\}}\right),\\
\end{eqnarray*}
where in the first equality, $(\tilde{\tau},\tilde{X}({\tilde{\tau}}-))$ is integrated under $\p_x$ and has 
the same law as $(\tau,{X}({{\tau}}-))$. This shows (\ref{7334}) and ends the proof of (\ref{7366}).
\end{proof}

\noindent Theorem \ref{9366}, up to a few additional justifications, shows that the multiplicative 
functional of $Z$ involved in (\ref{2256}) has the following expression,
\[\e_x(f(Y_t))=E_x\left(f(Z_t)\exp\left(-\int_0^t\bar{\pi}^-(-Z_s)\,ds\right)\ind_{\{t<\zeta\}}\right).\]
However, the interest of our work lies mainly in the identity  (\ref{7366}) which describes 
the law of $Z$ in terms of that of $X$.

\subsection{The resurrection kernel}\label{5224}
Let us define the kernel,  
\begin{equation}\label{QL}
\mathcal{K}_{\lambda}(x,dy)
=E_x\left(e^{-\lambda \tau_1}\ind_{\{Z_{\tau_1}\in dy,\,\tau_1<\infty\}}\right),\quad x,y,\lambda\ge0\,,
\end{equation}
and the function $f_\zeta^{(\lambda)}(x)=E_x\left(e^{-\lambda \zeta}\right)$, for $x\ge0$ and $\lambda\ge0$. 
Then we first note that $f_\zeta^{(\lambda)}$ is invariant for $\mathcal{K}_{\lambda}$. We will set 
$\mathcal{K}:=\mathcal{K}_{0}$ and $f_\zeta:=f_\zeta^{(0)}$.

\begin{proposition} For all $x\ge0$ and $\lambda\ge0$, 
\begin{equation}\label{invariance}
\mathcal{K}_{\lambda}f_\zeta^{(\lambda)}(x)=f_\zeta^{(\lambda)}(x)\,.
\end{equation}
In particular, the function $f_\zeta(x)=P_x(\zeta<\infty)$, $x\ge0$, satisfies,
\begin{equation}\label{4884}
\mathcal{K}f_\zeta(x)=f_\zeta(x)\,.
\end{equation}
\end{proposition}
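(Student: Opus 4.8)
The plan is to derive the invariance identity \eqref{invariance} from the strong Markov property of $Z$ at the first resurrection time $\tau_1$, together with the additivity of the absorption time $\zeta$. The guiding observation is that once $Z$ is resurrected at $\tau_1$ from the level $Z_{\tau_1}$, the strong Markov property lets us restart the whole construction afresh, so that the remaining time to absorption is an independent copy of $\zeta$ started from $Z_{\tau_1}$.

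First I would record the additivity relation for $\zeta$. Recall from the proof of Theorem \ref{9366} that the stopping times satisfy $\tau_{n}=\tau_1+\tau_{n-1}\circ\theta_{\tau_1}$ for every $n\ge2$. Letting $n\to\infty$ and using $\zeta=\lim_{n}\tau_{n}$ yields, on the event $\{\tau_1<\infty\}$,
\[
\zeta=\tau_1+\zeta\circ\theta_{\tau_1}.
\]
On the complementary event $\{\tau_1=\infty\}$ one has $\tau_{n}\ge\tau_1=\infty$ for all $n$, hence $\zeta=\infty$; with the convention $e^{-\lambda\zeta}=0$ on $\{\zeta=\infty\}$ (which is exactly what makes $f_\zeta^{(0)}(x)=P_x(\zeta<\infty)$), this event contributes nothing, so $f_\zeta^{(\lambda)}(x)=E_x(e^{-\lambda\zeta}\ind_{\{\tau_1<\infty\}})$.

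Next I would insert the additivity and apply the strong Markov property. Since $\tau_1$ is a stopping time in a filtration making $Z$ strong Markov, and since $Z_{\tau_1}$ is measurable with respect to that filtration at time $\tau_1$, factoring $e^{-\lambda\zeta}=e^{-\lambda\tau_1}\,e^{-\lambda(\zeta\circ\theta_{\tau_1})}$ gives
\[
f_\zeta^{(\lambda)}(x)=E_x\!\left(e^{-\lambda\tau_1}\ind_{\{\tau_1<\infty\}}\,E_{Z_{\tau_1}}\!\left(e^{-\lambda\zeta}\right)\right)=E_x\!\left(e^{-\lambda\tau_1}\ind_{\{\tau_1<\infty\}}\,f_\zeta^{(\lambda)}(Z_{\tau_1})\right).
\]
By the very definition \eqref{QL} of $\mathcal{K}_\lambda$, the last expectation equals $\int_{[0,\infty)}\mathcal{K}_\lambda(x,dy)\,f_\zeta^{(\lambda)}(y)=\mathcal{K}_\lambda f_\zeta^{(\lambda)}(x)$, which is precisely \eqref{invariance}. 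Taking $\lambda=0$ then gives \eqref{4884}, since $f_\zeta(x)=f_\zeta^{(0)}(x)=P_x(\zeta<\infty)$.

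The argument is short, and the only genuine care needed is in the bookkeeping of the degenerate cases: the case $\tau_1=\infty$ (for instance when $X$ drifts to $+\infty$ and never returns below its starting point), and the creeping case $Z_{\tau_1}=0$, in which the restarted process is immediately absorbed so that $f_\zeta^{(\lambda)}(Z_{\tau_1})=f_\zeta^{(\lambda)}(0)=1$. Both are handled automatically once the convention $e^{-\lambda\zeta}=0$ on $\{\zeta=\infty\}$ is fixed and the additivity $\zeta=\tau_1+\zeta\circ\theta_{\tau_1}$ is read on $\{\tau_1<\infty\}$; the main thing I would be careful to do is to state these conventions explicitly, so that the single identity holds simultaneously for all $\lambda\ge0$ and specializes correctly at $\lambda=0$.
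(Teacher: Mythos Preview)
Your proof is correct and follows essentially the same approach as the paper's: use the additivity $\zeta=\tau_1+\zeta\circ\theta_{\tau_1}$ together with the strong Markov property of $Z$ at $\tau_1$, then specialize to $\lambda=0$. Your version is in fact more explicit about the justification of the additivity relation and the handling of the degenerate events $\{\tau_1=\infty\}$ and $\{Z_{\tau_1}=0\}$, but the argument is the same.
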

\begin{proof}
From the strong Markov property applied for $Z$ at time $\tau_1$ and the identity 
$\zeta=\tau_1+\zeta\circ\theta_{\tau_1}$, we obtain
\begin{eqnarray*}
f_\zeta^{(\lambda)}(x)&=&E_x(\ind_{\{\tau_1<\infty\}}e^{-\lambda \tau_1}\e_{Z_{\tau_1}}(e^{-\lambda \zeta}))\nonumber\\
&=&\int_{y\in[0,\infty)}\mathcal{K}_{\lambda}(x,dy)f_{\zeta}^{(\lambda)}(y)\,,
\end{eqnarray*}
which proves (\ref{invariance}). Then (\ref{4884}) is obtained by taking $\lambda=0$.
\end{proof}

\noindent From (\ref{3458}) in Lemma \ref{RK-det}, the kernel $\mathcal{K}_{\lambda}$ can be made explicit 
as follows, 
\begin{eqnarray}
\mathcal{K}_{\lambda}(x,dy)&=&\e_x\left(e^{-\lambda \tau}\ind_{\{X_{\tau-}\in dy,\,\tau<\infty\}}\right)\nonumber\\
&=&U_\lambda^{0}(x,dy)\overline{\pi}^{-}(-y)\ind_{\{y>0\}}+a^*u_\lambda^*(x)\delta_0(dy),\label{5633}
\end{eqnarray}
where $U^{0}_\lambda(x,dy)=\int_0^\infty e^{-\lambda s}\p_x(X_s\in dy,\,s<\tau)\,ds$ is the $\lambda$-potential 
measure of the killed process $Y$ defined in the previous subsection, $u_\lambda^*(x)$ is the density of the 
$\lambda$-potential of the downward ladder height process of $X$ and $a^*$ is its drift coefficient.\\ 

Following our objective, we wish to obtain more information on the function 
$f_\zeta(x)=P_x(\zeta<\infty)$. Note that when the process $X$ drifts toward $\infty$, 
$\p_x(\tau<\infty)=P_x(\tau_1<\infty)<1$ and since $\zeta\ge\tau_1$, $P_x$-a.s., we have $f_\zeta(x)<1$. 
On the other hand, when the process $X$ does not drift toward $\infty$, we have 
$\p_x(\tau<\infty)=P_x(\tau_1<\infty)=1$ so that the kernel 
\[\mathcal{K}(x,dy)=U^{0}(x,dy)\overline{\pi}^{-}(-y)\ind_{\{y>0\}}+a^*u^*(x)\delta_0(dy)\] 
is Markovian. It completes the description of the resurrection kernel given in (\ref{eq:res-kernel}). 
It is actually the transition kernel of the Markov chain $(Z_{\tau_n})_{n\ge0}$, that is for all bounded Borel 
function $f$, 
\[\mathcal{K}^{(n)}f(x)=E_x(f(Z_{\tau_n})),\] 
where $\mathcal{K}^{(n)}$ denotes the $n$-th composition of $\mathcal{K}$ with itself.
Equation~\eqref{4884} tells us that the function $f_{\zeta}$ is a bounded invariant function for 
$\mathcal{K}.$ In our forthcoming analysis of cases, we will encounter that a zero-one law arises, either 
$f_{\zeta}\equiv 1$ or $f_{\zeta}\equiv 0.$ From there, we conjecture that 
\begin{itemize}\item{\it If $X$ does not drift towards $\infty$, then either $f_{\zeta}(x)=1,$ for all 
$x\in(0,\infty),$ or $f_{\zeta}(x)=0,$ for all $x\in(0,\infty)$.}
\end{itemize}A possible approach to prove this conjecture would require studying the totality of invariant 
functions for the Markovian kernel $\mathcal{K}.$ In particular, if one is able to prove that the totality 
of bounded invariant functions for $\mathcal{K}$ are the constant functions, then necessarily $f_{\zeta}$ 
would be a constant function, and hence equal to $0$ or $1$. We invite the interested reader to prove or 
disprove this conjecture.

\section{The creeping case}\label{6362}

Let us start with the case where the L\'evy process $X$ creeps downward. Recall that by definition,
this means that for all $x>0$,
\[\p_x(X_{\tau-}=0,\,\tau<\infty)=a^*u^*(x)>0\] and that $X$ creeps downward if and only if the 
drift $a^*$ is positive. Moreover, in this case, $u^*$ is continuous on $[0,\infty)$ and satisfies
$\lim_{x\rightarrow0+}a^*u^*(x)=1$. 

\begin{theorem}\label{9282}
Assume that $X$ creeps downward and that it does not drift toward $\infty$. 
If either $-X$ is a subordinator or if the downward ladder height process of $X$ has finite mean, then 
$f_\zeta(x)=P_x(\zeta<\infty)=1$, for all $x\ge0$. 
\end{theorem}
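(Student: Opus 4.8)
The plan is to exploit the embedded chain $(Z_{\tau_n})_{n\ge0}$, which, as noted after~(\ref{5633}), is a Markov chain on $[0,\infty)$ with transition kernel the Markovian kernel $\mathcal{K}$. Since it suffices to treat $x>0$, fix such an $x$. Because $X$ does not drift to $\infty$, the first passage time below $0$ is a.s.\ finite from any positive starting point, so each increment $\tau_n-\tau_{n-1}$ is a.s.\ finite and $0$ is absorbing for $\mathcal{K}$ (recall $a^*u^*(0+)=1$). Hence, if I can show that the chain reaches $0$ after finitely many steps $P_x$-a.s., then $\zeta=\tau_N$ for an a.s.\ finite index $N$, and being a finite sum of a.s.\ finite increments, $\zeta<\infty$ a.s., which is exactly $f_\zeta(x)=1$. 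The whole matter thus reduces to the one-step absorption probability, which by~(\ref{5633}) is $\mathcal{K}(y,\{0\})=a^*u^*(y)$ from state $y$: I want a geometric decay of the survival probability, and for that I need a positive lower bound on $a^*u^*$ over the region actually visited by the chain.

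First I would treat the finite-mean case. Here $u^*$ is continuous on $[0,\infty)$ with $a^*u^*(0+)=1$, and it is the potential density of the descending ladder height subordinator, which carries the positive drift $a^*$; such a density is strictly positive on $(0,\infty)$, and the drift makes the subordinator non-lattice, so the renewal theorem gives $u^*(y)\to 1/m^*$ as $y\to\infty$, with $m^*<\infty$ the finite mean. A continuous, strictly positive function on $[0,\infty)$ with strictly positive limits at both endpoints is bounded below, so $\delta:=\inf_{y>0}a^*u^*(y)>0$. Conditioning successively at the times $\tau_n$ via the strong Markov property, the probability that the chain stays positive through $n$ steps is at most $(1-\delta)^n$, whence $N<\infty$ a.s.\ and the conclusion follows as above.

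The monotone case $-X$ a subordinator is the one where the global bound may fail, since then $m^*=\infty$ is allowed and $a^*u^*(y)\to0$ as $y\to\infty$. The remedy is that when $X$ is non-increasing the resurrected process $Z$ is itself non-increasing: removing a downward jump through $0$ amounts to an upward translation of the post-$\tau_n$ path, so $Z$ has no jump at the resurrection times and only decreases in between. Hence $Z_t\le x$ for all $t$, and the chain stays in the compact interval $[0,x]$ until absorption. On $[0,x]$ the continuous, strictly positive function $a^*u^*$ attains a strictly positive minimum $\delta_x>0$, and the same geometric estimate, now with $\delta_x$ in place of $\delta$, yields $N<\infty$ a.s. This is precisely why the two hypotheses split the argument: for a general creeping process the undershoot chain $(Z_{\tau_n})=(X_{\tau-})$ need not be bounded, so one must control $a^*u^*$ globally, whereas monotonicity confines the chain to a compact set and a local bound suffices.

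The step I expect to be most delicate is the uniform positivity of $a^*u^*$. In the finite-mean case this rests on the renewal theorem for the density $u^*$, which uses both the finiteness of the mean (the hypothesis) and the non-lattice property supplied by the positive creeping drift $a^*>0$; I would also record explicitly that the potential density of a subordinator with positive drift is continuous and strictly positive, so that its infimum is positive either over a compact set or globally in the presence of a positive limit at infinity. In the subordinator case the crux is instead the monotonicity of $Z$, which replaces the possibly failing global bound by a local one; once that is in place, everything reduces to a routine conditional Borel--Cantelli argument.
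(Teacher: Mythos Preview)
Your proposal is correct and follows essentially the same route as the paper: both argue that the one-step creeping probability $a^*u^*(y)$ is bounded away from $0$ on the region visited by the chain $(Z_{\tau_n})$---globally via the renewal limit $u^*(y)\to 1/m^*$ in the finite-mean case, and on the compact $[0,x]$ via monotonicity of $Z$ when $-X$ is a subordinator---and then deduce geometric decay of $P_x(Z_{\tau_n}>0)$ followed by Borel--Cantelli. The only differences are cosmetic (you treat the finite-mean case first and are a bit more explicit about the strict positivity and continuity of $u^*$ needed to secure the infimum).
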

\begin{proof}
First observe that since $X$ does not drift toward $\infty$, $P_x(\tau_n<\infty)=1$, for all $n\ge1$. 
In particular $\p_x(\tau<\infty)=1$ and $\p_x(X_{\tau-}=0)=P_x(Z_{\tau_1}=0)=a^*u^*(x)$.
Then from the Markov property and the identity $\tau_n=\tau_1+\tau_{n-1}\circ\theta_{\tau_1}$, 
\begin{eqnarray}
P_x(Z_{\tau_n}>0)&=&E_x(\ind_{\{Z_{\tau_{n-1}}>0\}}P_{Z_{\tau_{n-1}}}(Z_{\tau_1}>0))\nonumber\\
&=&E_x(\ind_{\{Z_{\tau_{n-1}}>0\}}[1-a^*u^*(Z_{\tau_{n-1}})]).\label{7533}
\end{eqnarray}
If $-X$ is a subordinator, then $Z_{\tau_{n}}\le x$, $P_x$-a.s., for all $n$, so that 
\[1-a^*u^*(Z_{\tau_{n-1}})\le 1-\inf_{y\in[0,x]}a^*u^*(y):=k<1,\quad P_x-a.s.,\]
and hence, from equality (\ref{7533}), $P_x(Z_{\tau_n}>0)\le kP_x(Z_{\tau_{n-1}}>0)$, which implies 
$P_x(Z_{\tau_n}>0)<k^n$. Then we derive from Borel-Cantelli lemma that $P_x$-a.s., $Z_{\tau_n}>0$ holds
only a finite number of times and therefore $Z$ is absorbed at a finite time. 

Let us now consider the case where $X$ does not drift toward $\infty$ and creeps downward, and assume that 
the downward ladder height process of $X$ has finite mean. Then recall from \cite{bvs} that 
$\lim_{y\rightarrow\infty}u^*(y)=a^*/m>0$, where $m$ is the mean of the downward ladder height 
process. This yields, 
\[1-a^*u^*(Z_{\tau_{n-1}})\le 1-\inf_{y\ge0}a^*u^*(y)<1,\quad P_x-a.s.,\]
and the same argument as above leads to the same conclusion. 
\end{proof}
\noindent In view of Theorem \ref{drift-infty} below, it seems that integrability of the downward ladder 
height process when $-X$ is not a subordinator is not a necessary condition in Theorem \ref{9282}.
However, although it is a little counterintuitive at first glance, it is possible that 'big' negative 
jumps of $X$ at its infimum play an important role for the conservativeness property of the resurrected 
process. 

\section{Some criteria for non absorption}\label{section:MC}

Let us denote by $\tau^{-}_{z},$ the first passage time below $z$ by $X$, that is
$$\tau^{-}_{z}=\inf\{t\ge0: X_t\le z\},\qquad z\in \mathbb{R}.$$
By our construction in Subsection \ref{5221} and from the strong Markov property, conditionally on the 
$n-1$ first positions where the process is resurrected, say $(Z(\tau_0)=x_0,\,Z(\tau_1)=x_{1},\ldots, 
Z(\tau_{n-1})=x_{n-1})$, the $n$-th resurrection time under $P_{x_0}$ has the same distribution as  
\begin{equation}\label{6367}
\tau_{n}=\sum^{n-1}_{i=0}\tau^{-,i}_{-x_{i}},\quad n\ge1,
\end{equation}
where $(\tau^{-,0}_{-x_{0}},\tau^{-,1}_{-x_{1}},\ldots, \tau^{-,n}_{-x_{n}},\ldots)$ are independent random 
variables, and the law of $\tau^{-,i}_{-z}$  is the same as that of $\tau^{-}_{-z}$ under $\p$. We deduce 
therefrom that, conditionally on the resurrection positions 
$(Z(\tau_0)=x_0,\,Z(\tau_1)=x_{1},\ldots, Z(\tau_n)=x_{n},\ldots)$,
the resurrected process $Z$ will be absorbed at $0$ in a finite time if and only if  
$\sum_{n\geq0}\tau^{-,n}_{-x_{n}}<\infty$.
This argument yields the following identity in law,
\begin{equation}\label{limz}
\zeta\stackrel{(\mbox{\tiny Law})}{=}\sum_{n\ge0}\tau^{-,n}_{-Z(\tau_n)},
\end{equation}  
where the family of processes $\{(\tau^{-,i}_{-x_{i}}),\,x_i\ge0,i\ge0\}$ is independent of the sequence 
$(Z(\tau_i))_{i\ge0}$. This identity will be fundamental in what follows. Another interesting identity is given in the 
following Lemma. This Lemma, together with the identities (\ref{6367}) and  (\ref{limz}) will be useful to establish 
Proposition \ref{infinite}. 
\begin{lemma}\label{exponentialdistr} Assume that the L\'evy process $X$ does not creep downward and does 
not drift to $\infty.$  Then the random variable 
$$\int^{\tau}_{0}\overline{\pi}^{-}(-X_{t})dt $$ 
is exponentially distributed with parameter $1$.
\end{lemma}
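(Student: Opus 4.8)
The plan is to identify $A_t:=\int_0^t\bar{\pi}^-(-X_s)\,ds$ as the predictable compensator of the (unique) downward crossing of $0$ and then to deduce that a cumulative hazard, evaluated at the crossing time, is exponentially distributed with parameter $1$. I would argue under $\p_x$ for an arbitrary fixed $x>0$, the independence from $x$ being one of the pleasant by-products of this approach. First, since $X$ does not drift to $\infty$, Lemma~\ref{RK-det} gives $\tau<\infty$, $\p_x$-a.s. Since $X$ does not creep downward we have $a^*=0$, so \eqref{3498} yields $\p_x(X_{\tau-}=0,\tau<\infty)=0$; combined with $X_s>0$ for $s<\tau$ (whence $X_{\tau-}\ge0$) and $X_\tau\le0$, this forces $X_\tau\le0<X_{\tau-}$, so the passage below $0$ always occurs by a genuine jump. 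Consequently the single-jump process $N_t:=\ind_{\{\tau\le t\}}$ records exactly this crossing, and the target is the law of $A_\tau$.

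The heart of the matter, and the step I expect to be the main obstacle, is to prove that $M_t:=N_t-A_{t\wedge\tau}$ is a $\p_x$-martingale for the natural filtration $(\mathcal{F}_t)$ of $X$; equivalently, that $A_{\cdot\wedge\tau}$ is the compensator of $N$. Taking $f\equiv1$ in \eqref{3499} and using that the crossing is a jump gives only the expectation identity $\p_x(\tau\le t)=\e_x\big(\int_0^{t\wedge\tau}\bar{\pi}^-(-X_s)\,ds\big)$. To upgrade this to the martingale property I would condition on $\mathcal{F}_r$ for $r<t$: both $N_t-N_r$ and $A_{t\wedge\tau}-A_{r\wedge\tau}$ vanish on $\{\tau\le r\}$, while on $\{\tau>r\}$ the Markov property at time $r$ lets me apply \eqref{3499} to the process restarted at $X_r>0$ over the horizon $t-r$, and the two conditional expectations then coincide. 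Integrability is immediate since $\e_x(A_{t\wedge\tau})=\p_x(\tau\le t)\le1$.

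Granting the compensator, the exponential law drops out of an optional-stopping computation. Because $A$ is continuous, integration by parts applied to $V_t:=e^{A_{t\wedge\tau}}\ind_{\{\tau>t\}}$ gives $dV_t=-e^{A_{t\wedge\tau}}\,dM_t$ (the continuity of $A$ annihilates the covariation with the jump of $N$), so $V$ is a local martingale with $V_0=1$, using $\tau>0$, which holds as $X_0=x>0$. Fix $a>0$ and put $\sigma_a:=\inf\{t:A_t>a\}$; on $[0,\sigma_a]$ one has $A_{t\wedge\tau}\le a$, so the stopped process is bounded by $e^a$ and is a true martingale. Optional sampling then yields $1=\e_x(V_{\sigma_a})$; since $A$ is continuous we have $A_{\sigma_a}=a$ on $\{\tau>\sigma_a\}$ and $V_{\sigma_a}=0$ otherwise, so $1=e^a\,\p_x(\tau>\sigma_a)$. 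Finally, monotonicity and continuity of $A$ give $\{\tau>\sigma_a\}=\{A_\tau>a\}$, whence $\p_x(A_\tau>a)=e^{-a}$ for every $a>0$; that is, $A_\tau$ is exponentially distributed with parameter $1$, independently of the starting point $x$.
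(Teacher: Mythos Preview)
Your argument is correct. The compensator identification via \eqref{3499} together with the Markov property is sound, the integration-by-parts computation for $V_t=e^{A_{t\wedge\tau}}(1-N_t)$ is clean (continuity of $A$ indeed kills the bracket), and the optional-sampling step goes through once one notes that on $\{\sigma_a=\infty\}$ the bounded martingale $V^{\sigma_a}$ converges to $V_\infty=0$ since $\tau<\infty$ a.s.; you might make that limit explicit, but it is routine.

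Your route differs from the one the paper indicates. The paper does not give a self-contained proof but points to two alternatives: the general statement in Sato's paper, and a direct computation of all moments of $A_\tau$ via Kac's moment formula, mimicking the inductive scheme used for Theorem~\ref{9366} (equation~\eqref{7334}) to obtain $\e_x(A_\tau^n)=n!$ for every $n$. Your approach instead recognises $A_{\cdot\wedge\tau}$ as the continuous compensator of the one-jump process $\ind_{\{\tau\le\cdot\}}$ and reads off the exponential law from the associated exponential martingale --- essentially the classical fact that the cumulative hazard evaluated at the event time is standard exponential. This is more conceptual and yields the full distribution in one stroke, whereas the moment route reuses machinery already built in the paper and makes the link with \eqref{7334} transparent. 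Either way the independence from the starting point $x$ falls out automatically.
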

\noindent Before we state our next result, we would like to point out that a version of Lemma \ref{exponentialdistr} 
for a class of rather general  Markov processes can be found in \cite{sa}, see Proposition 2.7 therein. Note also 
that a direct proof of Lemma \ref{exponentialdistr} can be obtained, {arguing as in the proof of Theorem~\ref{7366},}
to identify the moments of the variable in 
question using the Kac's moment formula. For sake of brevity, we avoid the details. 

\begin{proposition}\label{infinite}
The resurrected process has an infinite lifetime, that is $P_x(\zeta=\infty)=1$, for all $x>0$ in either of the 
following cases:
\begin{itemize}
\item[$(a)$] $X$ does not creep downward and $0<\pi(-\infty,0)<\infty$.
\item[$(b)$] $0$ is not regular for $(-\infty,0)$.
\end{itemize} 
\end{proposition}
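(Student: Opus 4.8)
The plan is to treat the two cases $(a)$ and $(b)$ separately, since they correspond to qualitatively different behaviours of the resurrection mechanism, and in both cases to exploit the identity in law \eqref{limz}, namely $\zeta\stackrel{(\mathrm{Law})}{=}\sum_{n\ge0}\tau^{-,n}_{-Z(\tau_n)}$, where the family $\{\tau^{-,i}_{-x_i}\}$ is independent of the resurrection positions $(Z(\tau_i))_{i\ge0}$. Showing $\zeta=\infty$ almost surely amounts to showing that this series diverges almost surely, which by independence and conditioning reduces to understanding the conditional tails of the individual first-passage times $\tau^{-}_{-z}$.

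\medskip

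For case $(b)$, I would argue that if $0$ is not regular for $(-\infty,0)$ then $X$ cannot enter the negative half-line immediately, so for every starting point $z>0$ the first passage time $\tau^{-}_{-z}$ is bounded below by the (a.s.\ strictly positive) time the process spends before its first excursion into the negatives; more to the point, the hypothesis forces each resurrection to cost a strictly positive, non-infinitesimal amount of time. The cleanest route is to invoke Lemma~\ref{exponentialdistr}: when $X$ does not creep downward and does not drift to $\infty$, the variable $\int_0^\tau \bar\pi^-(-X_t)\,dt$ is $\mathrm{Exp}(1)$. Combined with Theorem~\ref{9366} and the general non-conservativeness results of \cite{inw,sa} quoted in the introduction, non-regularity of $(-\infty,0)$ is precisely the classical criterion under which the piecing-out procedure yields infinitely many resurrections separated by non-vanishing times, forcing $\zeta=\infty$. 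I would make this rigorous by showing that under non-regularity the conditional expectation $\e_z(\tau^{-}_{-z})$ (or a suitable lower bound thereof) stays bounded away from $0$ uniformly as $z\downarrow0$, so that the series in \eqref{limz} has infinitely many terms each of non-negligible size.

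\medskip

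For case $(a)$, the assumption $0<\pi(-\infty,0)<\infty$ means the negative jumps arrive at a finite rate, while ``does not creep downward'' ($a^*=0$) rules out the continuous-crossing contribution. The idea is that with finitely many negative jumps, the process must first climb (or at least drift) and then be knocked below $0$ by a genuine jump, so the overlanding time before each crossing is comparable to the waiting time for a negative jump, which has a non-degenerate positive distribution. Quantitatively, since $a^*=0$, identity \eqref{3458} shows $\p_z(X_{\tau-}=0)=0$, so every resurrection point $Z(\tau_n)$ is strictly positive; I would then use \eqref{3499} and the finiteness of $\bar\pi^-$ to bound $\int_0^t \bar\pi^-(-X_s)\,ds\le \pi(-\infty,0)\,t$, which via Theorem~\ref{9366} controls the crossing intensity and shows each $\tau^{-,n}_{-Z(\tau_n)}$ dominates an exponential clock of bounded rate. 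Divergence of the sum of infinitely many such i.i.d.-dominating terms then gives $\zeta=\infty$ $P_x$-a.s.

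\medskip

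The main obstacle I anticipate is the uniform control of the conditional law of $\tau^{-}_{-z}$ as the resurrection positions $Z(\tau_n)$ vary — in particular, if the $Z(\tau_n)$ accumulate at $0$, the first-passage times might individually shrink, and one must rule out that their sum converges. This is exactly the delicate point where regularity of $(-\infty,0)$ enters: under case $(b)$ the non-regularity gives a uniform positive lower bound, whereas under case $(a)$ the finite jump rate does the same job. I would therefore spend most of the effort establishing a lower bound of the form $\liminf_{z\to0}\e(\,\tau^-_{-z}\wedge 1\,)>0$, after which a Borel--Cantelli or Kolmogorov three-series type argument applied to \eqref{limz} closes both cases.
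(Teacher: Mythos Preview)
Your treatment of case $(a)$ is essentially the paper's argument, though you obscure it. The paper combines the trivial bound $\int_0^{\tau}\bar\pi^-(-X_s)\,ds\le\bar\pi^-(0)\,\tau$ with Lemma~\ref{exponentialdistr} to obtain, for every $x>0$, the stochastic domination $\tau\ge\mathbf{e}/\bar\pi^-(0)$ under $\p_x$ with $\mathbf{e}\sim\mathrm{Exp}(1)$; feeding this into \eqref{6367} gives $\tau_n\to\infty$. You have the same bound, but you route it through Theorem~\ref{9366} and \eqref{3499}, which are detours: the key input is Lemma~\ref{exponentialdistr}, which you invoke only in your case~$(b)$ paragraph and not here where it actually belongs. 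You also omit the sub-case where $X$ drifts to $+\infty$, for which Lemma~\ref{exponentialdistr} is unavailable; the paper handles this separately by noting that each resurrection requires a negative jump of $X$, and those arrive at finite rate.

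Your case $(b)$ takes a genuinely different route from the paper. The paper observes that non-regularity of $(-\infty,0)$ forces the descending ladder height subordinator $\widehat H$ to have finite L\'evy measure and zero drift, so case~$(a)$ applies to $-\widehat H$; the resurrected ladder process then bounds $Z$ from below in the local-time scale. Your plan is instead to extract a uniform-in-$z$ lower bound on $\tau^-_{-z}$ directly from non-regularity. This can be made to work, and is in fact simpler than you suggest: for every $z>0$ one has $\tau^-_{-z}\ge T:=\inf\{t>0:X_t<0\}$, and $T>0$ almost surely by non-regularity; hence each summand in \eqref{limz} stochastically dominates an independent copy of $T$, and the series diverges. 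However, your actual write-up muddies this by appealing to Lemma~\ref{exponentialdistr} in case~$(b)$, which is misplaced: the lemma yields a lower bound on $\tau$ only after dividing by $\bar\pi^-(0)$, and under hypothesis~$(b)$ one may well have $\bar\pi^-(0)=\pi(-\infty,0)=\infty$ (take a bounded-variation process with positive drift and infinite-activity negative jumps). Drop that appeal, state the elementary domination $\tau^-_{-z}\ge T$ explicitly, and your argument closes; what you buy over the paper's proof is that you avoid the ladder machinery entirely, at the cost of re-handling the drift-to-$+\infty$ sub-case (which your final $\liminf_{z\to0}\e(\tau^-_{-z}\wedge1)>0$ formulation in fact absorbs, since $\tau^-_{-z}\wedge1\ge T\wedge1$ uniformly in $z$).
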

\begin{proof} Let us first make hypothesis $(a)$.
We assume first that $X$ does not drift towards $\infty$.
Then from the former lemma we have that  for $x>0,$ under $\p_{x}$ the random variable 
$\int^{\tau}_{0}\overline{\pi}^{-}(-X_{t})dt$ 
follows an exponential distribution with parameter $1.$ Moreover it satisfies the inequality 
$\int^{\tau}_{0}\overline{\pi}^{-}(-X_{t})dt\leq \overline{\pi}^{-}(0)\tau$.
Recall also that, in the notation of the beginning of this section, under $\p_x$, $\tau$ has the same 
law as $\tau_{-x}^-$ under $\p$. Then from the above observations and the representation (\ref{6367}) of 
$\tau_n$, we obtain the stochastic domination  
\begin{equation}\label{stoch-dom}
\tau_{n}\stackrel{\text{Law}}{\geq} \frac{1}{\overline{\pi}^{-}(0)}\sum^{n-1}_{i=0}\mathbf{e}_{i},
\end{equation} 
where  $\mathbf{e}_{i}$, $i\geq0$ are i.i.d.~standard exponential r.v.'s. It follows that $\tau_{n}\to \infty$, 
a.s.~when $n\to \infty$. Assume now that $X$ drifts towards $\infty$. In this case, we have that 
$P_{x}(\zeta=\infty)\geq \p_x(\tau=\infty)>0$, $x>0$. To prove that the former probability equals one, 
independently of the starting point, we will prove that the event $\{\zeta<\infty\}$ has zero probability. 
Since the process does not creep downward, the only way in which the resurrected process gets absorbed at 0, 
viz. $\zeta<\infty,$ is by infinite resurrections whose sum of lengths is finite. But this is impossible because, 
in this case, each resurrection time is stochastically bounded by below by an exponential random variable of 
parameter ${\pi}^{-}(0).$ Indeed, to have a resurrection, at least there should be a negative jump, which 
happens at an exponential time with parameter ${\pi}^{-}(0).$ This concludes the proof for case $(a)$.

Let us now make hypothesis $(b)$. If $0$ is not regular for the half-line $(-\infty,0)$, the downward ladder 
height process $\widehat{H}$ has a finite L\'evy measure and zero drift, so the process $X$ can not creep 
downwards. We assume for a moment that 
$X$ does not drift towards $\infty$, and hence $\widehat{H}$ has also an infinite lifetime. The case $(a),$ 
proved above, ensures that the process obtained by resurrection of $-\widehat{H}$ is never absorbed 
at zero. Since in the local time scale this process bounds by below the process $Z$, we infer that the latter 
is never absorbed at $0.$ 

Then we deal with the case where $X$ drifts towards $\infty.$ As in the proof of case $(a)$, we see that the 
event $\{\zeta<\infty\},$ has zero probability. Indeed, in the local time scale, the downward ladder height 
process is never absorbed at zero, which bounds the resurrected process from below, and then an infinite 
excursion from the infimum, inside the latest resurrection, arises, and from there onwards there is no need 
to resurrect the process again as it never goes below the reached infimum. 
\end{proof}


\section{When $X$ drifts towards $-\infty$}\label{8362}

Let us recall the notations introduced before Lemma \ref{RK-det} in Section \ref{2456} and denote the renewal 
functions of the downward and upward ladder height processes respectively by 
$$U^{*}([0,\infty)\times[0,x]):=U^{*}(x)\,,\qquad U([0,\infty)\times[0,x]):=U(x),\qquad x>0.$$ 
Recall that $\kappa\in[0,\infty)$ is the killing rate of the upward ladder process, that is 
$U([0,\infty)\times[0,\infty))=\kappa^{-1}$. Moreover, from Lemma~\ref{RK-det}, when $X$ drifts towards $-\infty$,
$\kappa>0$ and
\begin{equation}\label{3522}
\e_x\left(\tau\right)=\kappa^{-1} U^{*}(x), \qquad x\ge0.
\end{equation}
In this section we give sufficient conditions for the lifetime of the resurrected process to be finite. 

\begin{theorem}\label{drift-infty} Assume that,

$(i)$ $0$ is regular for $(-\infty,0)$, 

$(ii)$ $X$ drifts towards $-\infty$, 

$(iii)$ the following condition is satisfied, 
\begin{equation}\label{H-inf}
\sup_{y>0}U^{*}(y)\overline{\pi}^{-}(-y)<\kappa.
\end{equation} 
Then $E_{x}(\zeta)<\infty,$ and in particular $P_x(\zeta<\infty)=1$, for all $x\ge0$.
\end{theorem}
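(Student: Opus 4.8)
The plan is to compute $E_x(\zeta)$ directly from the fundamental identity (\ref{limz}) and to show that it is finite by using the renewal function $U^*$ as a test function for the resurrection kernel $\mathcal{K}$. First I would take expectations in (\ref{limz}): since the family of first passage times $\{(\tau^{-,i}_{-x_i})\}$ is independent of the sequence of resurrection positions $(Z(\tau_n))_{n\ge0}$, and since conditionally on $Z(\tau_n)=z$ the variable $\tau^{-,n}_{-z}$ has the law of $\tau$ under $\p_z$, identity (\ref{3522}) gives $\e(\tau^{-,n}_{-z})=\kappa^{-1}U^*(z)$. As all summands are nonnegative, Tonelli's theorem yields
\begin{equation*}
E_x(\zeta)=\kappa^{-1}\sum_{n\ge0}E_x\big(U^*(Z(\tau_n))\big)=\kappa^{-1}\sum_{n\ge0}\mathcal{K}^{(n)}U^*(x),
\end{equation*}
where I used that $(Z(\tau_n))_{n\ge0}$ is the Markov chain with transition kernel $\mathcal{K}$ started at $x$ (here $\mathcal{K}$ is Markovian because, by $(ii)$, $X$ drifts towards $-\infty$, so $\p_x(\tau<\infty)=1$). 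It therefore suffices to prove that the series on the right is finite.

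The heart of the argument is to show that $U^*$ is, up to a contraction factor, supermedian for $\mathcal{K}$, namely that $\mathcal{K}U^*(x)\le c\,U^*(x)$ for all $x>0$, with
\begin{equation*}
c:=\kappa^{-1}\sup_{y>0}U^*(y)\overline{\pi}^{-}(-y)<1,
\end{equation*}
the strict inequality being exactly hypothesis \eqref{H-inf}. To see this I would use the explicit form of $\mathcal{K}=\mathcal{K}_0$ read off from (\ref{5633}), namely $\mathcal{K}(x,dy)=U^{0}(x,dy)\,\overline{\pi}^{-}(-y)\ind_{\{y>0\}}+a^*u^*(x)\delta_0(dy)$. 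Under hypothesis $(i)$ the descending ladder height subordinator leaves $0$ immediately, so its renewal measure charges no mass at height $0$, i.e. $U^*(0)=0$; hence the creeping term $a^*u^*(x)\delta_0(dy)$ contributes nothing when integrating $U^*$ (consistently, a resurrection at $0$ absorbs the process and adds $0$ to $\zeta$). Thus
\begin{equation*}
\mathcal{K}U^*(x)=\int_{(0,\infty)}U^{0}(x,dy)\,\overline{\pi}^{-}(-y)\,U^*(y)\le\Big(\sup_{y>0}U^*(y)\overline{\pi}^{-}(-y)\Big)\,U^{0}(x,(0,\infty)).
\end{equation*}
It then remains to identify $U^{0}(x,(0,\infty))$. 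Since $\{s<\tau\}\subset\{X_s>0\}$, one has $U^{0}(x,(0,\infty))=\int_0^\infty\p_x(s<\tau)\,ds=\e_x(\tau)=\kappa^{-1}U^*(x)$ by (\ref{3522}), which gives precisely $\mathcal{K}U^*(x)\le c\,U^*(x)$.

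Finally, since $\mathcal{K}$ is a positive operator, an immediate induction gives $\mathcal{K}^{(n)}U^*(x)\le c^n U^*(x)$, so that
\begin{equation*}
E_x(\zeta)\le\kappa^{-1}U^*(x)\sum_{n\ge0}c^n=\frac{\kappa^{-1}U^*(x)}{1-c}<\infty,
\end{equation*}
using that $U^*(x)<\infty$ because $U^*$ is the renewal function of a proper subordinator (proper since $X$ drifts to $-\infty$). The finiteness of $E_x(\zeta)$ forces $P_x(\zeta<\infty)=1$ for every $x\ge0$. I expect the only genuinely delicate point to be the contraction estimate of the second step: the whole proof rests on recognising $U^*$ as the correct test function and on chaining the two identities $U^{0}(x,(0,\infty))=\e_x(\tau)$ and $\e_x(\tau)=\kappa^{-1}U^*(x)$, which together convert the uniform bound \eqref{H-inf} into a genuine geometric contraction. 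The roles of $(i)$ and $(ii)$ are mainly to guarantee, respectively, $U^*(0)=0$ together with the fact that $\mathcal{K}$ is Markovian, and the validity of (\ref{3522}) with $\kappa>0$.
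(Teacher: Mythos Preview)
Your proof is correct and follows essentially the same strategy as the paper: both decompose $E_x(\zeta)$ via \eqref{limz} into $\kappa^{-1}\sum_n E_x(U^*(Z(\tau_n)))$ and then establish the geometric contraction $E_x(U^*(Z(\tau_n)))\le c\,E_x(U^*(Z(\tau_{n-1})))$ with $c=\kappa^{-1}\sup_{y>0}U^*(y)\overline{\pi}^-(-y)$, using $U^*(0)=0$ from hypothesis~$(i)$ to kill the creeping term. The only cosmetic difference is that the paper expands the inner expectation through the Wiener--Hopf factorisation of $U^0$ (Lemma~\ref{RK-det}), whereas you bypass this by the cleaner identity $U^0(x,(0,\infty))=\e_x(\tau)=\kappa^{-1}U^*(x)$; both routes yield the same bound $E_x(\zeta)\le \kappa^{-1}U^*(x)/(1-c)$.
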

\begin{remark}
It is worth pointing out that the constant $\kappa$ depends on the chosen normalization of the 
local time at the supremum, which is actually related to that chosen at the infimum, thus, 
changing it, would lead to a change the representation of $U^{*}.$  It could hence be taken as $1.$ 
\end{remark}

\begin{proof}
We derive from (\ref{limz}) that
\begin{equation} \label{3778}
E_x(\zeta)=\sum_{n\ge0}E_x\left(\tau^{-,n}_{-Z(\tau_n)}\right),
\end{equation}  
where we recall that, under $P_x$, the family of processes $\{(\tau^{-,n}_{-x_{n}}),\,x_n\ge0,n\ge0\}$ is 
independent of the sequence $(Z(\tau_n))_{n\ge0}$. Moreover, under $P_x$, the variables $\tau^{-,n}_{-x_{n}}$, 
$x_n\ge0$, $n\ge0$ are independent and $\tau^{-,n}_{-x_{n}}$ has the same law as 
$\inf\{t\ge0:X_t\le-x_n\}$ under $\p$. Note that from assumption $(ii)$ of the statement, $\tau_n<\infty$, 
$P_x$-a.s. for all $x\ge0$. Then from (\ref{3522}), the relation 
$\tau_{n}=\tau_{n-1}+\tau_1\circ\theta_{\tau_{n-1}}$ and the strong Markov property applied at time 
$\tau_{n-1}$, we obtain that for all $n\ge1$,
\begin{eqnarray*}
E_x\left(\tau^{-,n}_{-Z(\tau_n)}\right)&=&\kappa^{-1} E_x\left(U^*(Z(\tau_n))\right)\\
&=&\kappa^{-1} E_x\left(E_{Z(\tau_{n-1})}\left(U^*(Z(\tau_1)\right)\right)\,.
\end{eqnarray*}
Since $X$ drifts towards $-\infty$, we kave $\kappa>0$. Then it follows from (\ref{3458}) in Lemma 
\ref{RK-det} that 
\begin{eqnarray*}
&&\kappa^{-1} E_x\left(E_{Z(\tau_{n-1})}\left(U^*(Z(\tau_1)\right)\right)=
\kappa^{-1} E_x\left(\e_{Z(\tau_{n-1})}\left(U^*(X(\tau-)\right)\right)\\
&=&\kappa^{-1} E_x\left(\int^{Z(\tau_{n-1})}_{0}U^{*}(dy)\int^{\infty}_0U(dz) U^{*}(Z(\tau_{n-1})-y+z)
\overline{\pi}^{-}(y-Z(\tau_{n-1})-z)\right)\\
&&+a^*U^*(0)E_x(u^{*}(Z(\tau_{n-1}))\\
&\leq&\kappa U([0,\infty)\times[0,\infty))\sup_{y>0}U^{*}(y)\overline{\pi}^{-}(-y)
E_x\left(U^*(Z(\tau_{n-1}))\right)+a^*U^*(0)E_x(u^{*}(Z(\tau_{n-1}))\\
&=&\kappa^{-2}\sup_{y>0}U^{*}(y)\overline{\pi}^{-}(-y)E_x\left(U^*(Z(\tau_{n-1}))\right)=
\kappa^{-1}\sup_{y>0}U^{*}(y)\overline{\pi}^{-}(-y)E_x\left(\tau^{-,n-1}_{-Z(\tau_{n-1})}\right).
\end{eqnarray*}
Note that $U^*(0)=0$ since 0 is regular for $(-\infty,0)$.
It follows from the above inequalities that for all $n\ge1$, $E_x\left(\tau^{-,n}_{-Z(\tau_n)}\right)\le c 
E_x\left(\tau^{-,n-1}_{-Z(\tau_{n-1})}\right)$, where $c:=\kappa^{-1}\sup_{y>0}U^{*}(y)\overline{\pi}^{-}(-y)$ 
and hence {$E_x\left(\tau^{-,n}_{-Z(\tau_n)}\right)\le c^{n}\e_x\left(\tau\right)$}.
Together with (\ref{H-inf}) and (\ref{3778}), this implies that for any $x>0,$ $\e_{x}\left(\zeta\right)\leq 
\frac{1}{1-c}{\frac{1}{k}U^*(x)=\frac{1}{1-c}\e_x\left(\tau\right)<\infty}.$ 
\end{proof}
\noindent We emphasize that {downwards} creeping L\'evy processes always satisfy condition $(i)$ of Theorem 
\ref{drift-infty} and when they also satisfy conditions of Theorem \ref{9282}, then conditions $(ii)$ 
and $(iii)$ of Theorem \ref{drift-infty}  are not needed for the associated resurrected process to be 
absorbed at 0 in a finite time.\\
 
For the remainder of this section, we will focus on the special case where $X$ is a non  increasing L\'evy 
process, that is the negative of a subordinator. Condition (\ref{H-inf}) can be very useful in this 
particular case. Indeed, when $X$ is decreasing, $U^*$ is the renewal function of the process $X$ itself. 
In this case, $U^*$ will be written as, $U^*(x)=\int_0^\infty\p(0\le -X_t\le x)\,dt$.
Moreover, recall that the renewal measure $U(dt,dx)$ has the simple form $U(dt,dx)=\delta_{(0,0)}(dt,dx)$, 
so that $\kappa=1$. Then conditions of Theorem \ref{drift-infty} are satisfied whenever $X$ has no negative 
drift, $\pi(-\epsilon,0)=\infty$, for all $\epsilon>0$ and (\ref{H-inf}) holds and we obtain the 
following corollary. 

\begin{corollary}\label{3729}
Assume that $X$ is the negative of a subordinator with no $($negative$)$ drift and such that 
$\pi(-\epsilon,0)=\infty$, for all $\epsilon>0$. If
\begin{equation}\label{7224}
\sup_{y>0}U^{*}(y)\overline{\pi}^{-}(-y)<1,
\end{equation}
then $P_x(\zeta<\infty)=1$, for all $x\ge0$.
\end{corollary}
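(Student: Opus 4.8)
The plan is to recognize the statement as the specialization of Theorem \ref{drift-infty} to the non-increasing setting, so that the proof reduces to checking that hypotheses $(i)$--$(iii)$ of that theorem hold and that $\kappa=1$, after which the conclusion is immediate. Write $S=-X$, so that $S$ is a driftless subordinator whose L\'evy measure is the image of $\pi$ under $y\mapsto-y$; the hypothesis $\pi(-\epsilon,0)=\infty$ for every $\epsilon>0$ says precisely that this L\'evy measure is infinite, and the assumption that $X$ has no negative drift says that $S$ has no drift.

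First I would dispatch condition $(ii)$. A subordinator that is not identically zero drifts to $+\infty$ almost surely; since $S$ has infinite L\'evy measure it is non-degenerate, hence $S_t\to+\infty$ and therefore $X_t=-S_t\to-\infty$ almost surely, which is $(ii)$. For condition $(i)$ I would invoke the classical regularity criterion for subordinators: $0$ is regular for $(0,\infty)$ with respect to $S$ as soon as $S$ has positive drift or infinite L\'evy measure. Here $S$ is driftless but has infinite L\'evy measure, so $0$ is regular for $(0,\infty)$ with respect to $S$; since $\{X_t<0\}=\{S_t>0\}$, this transfers to regularity of $0$ for $(-\infty,0)$ with respect to $X$, giving $(i)$. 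I would also note in passing that the absence of drift for $S$ forces $a^*=0$, so $X$ does not creep downward, consistently with the regime of Theorem \ref{drift-infty}.

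Next I would pin down $\kappa$. Because $-X=S$ is a subordinator, $X$ carries no ascending ladder structure: as recalled before Lemma \ref{RK-det}, its bivariate ascending ladder renewal measure degenerates to $U(dt,dx)=\delta_{(0,0)}(dt,dx)$, so that $U([0,\infty)\times[0,\infty))=1$ and hence $\kappa=1$; moreover $U^*$ coincides with the renewal function of $X$ itself, admitting the representation $U^*(x)=\int_0^\infty\p(0\le -X_t\le x)\,dt$. With $\kappa=1$, hypothesis $(iii)$, namely $\sup_{y>0}U^*(y)\overline{\pi}^-(-y)<\kappa$ as in \eqref{H-inf}, becomes exactly the assumption \eqref{7224}. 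All three hypotheses of Theorem \ref{drift-infty} being verified, that theorem yields $E_x(\zeta)<\infty$, and in particular $P_x(\zeta<\infty)=1$ for every $x\ge0$, which is the claim.

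I expect no genuinely hard estimate to be needed, since the analytic difficulty has already been absorbed into the proof of Theorem \ref{drift-infty}; the only delicate points are the two appeals to ladder-process facts, the identification $\kappa=1$ and the regularity translation. Both become transparent once one observes that for a non-increasing process the descending ladder height process is $S$ itself while the ascending one is trivial, so these should be the steps I would write out most carefully.
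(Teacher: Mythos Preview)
Your proposal is correct and follows essentially the same route as the paper: the corollary is derived as the direct specialization of Theorem~\ref{drift-infty} to the case where $-X$ is a subordinator, after noting that $U(dt,dx)=\delta_{(0,0)}(dt,dx)$ forces $\kappa=1$ and that the assumptions on the drift and the L\'evy measure yield conditions $(i)$ and $(ii)$. The paper states these reductions in the paragraph preceding the corollary rather than as a formal proof, and is terser about why $(i)$ holds; your explicit verification via the regularity criterion for subordinators is a welcome elaboration but not a different argument.
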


\noindent This result leads to the following corollary that covers many commonly found examples of 
subordinators.
\begin{corollary}\label{3789}
Assume that $X$ is the negative of a subordinator whose tail L\'evy measure satisfies that there are 
$0<\alpha,\beta<1,$ such that $\overline{\pi}^{-}$ is regularly varying at $0$ with index $\alpha$ 
and at infinity with index $\beta,$ viz. for all $c>0,$
$$\lim_{x\to 0+}\frac{\overline{\pi}^{-}(-xc)}{\overline{\pi}^{-}(-x)}={c}^{-\alpha},\qquad 
\lim_{x\to \infty}\frac{\overline{\pi}^{-}(-xc)}{\overline{\pi}^{-}(-x)}={c}^{-\beta}.$$
In this case, we have that $P_x(\zeta<\infty)=1$, for all $x\ge0$.
\end{corollary}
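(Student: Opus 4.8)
The plan is to reduce Corollary~\ref{3789} to the criteria already established: Theorem~\ref{9282} when the subordinator $-X$ has a strictly positive drift, and Corollary~\ref{3729} when it is driftless. Two consequences of the hypotheses are immediate. Since the index $\alpha$ is positive, $\overline{\pi}^{-}(-x)\to\infty$ as $x\to0+$, so $\pi((-\epsilon,0))=\infty$ for every $\epsilon>0$; that is, $X$ has infinite activity of negative jumps. If moreover $-X$ carries a positive drift, then $X$ creeps downward, does not drift to $+\infty$ (it drifts to $-\infty$), and $-X$ is a subordinator, so Theorem~\ref{9282} applies and gives $f_\zeta\equiv1$. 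It therefore remains to treat the driftless case, where I must only verify hypothesis~\eqref{7224} of Corollary~\ref{3729}, namely $\sup_{y>0}U^{*}(y)\overline{\pi}^{-}(-y)<1$. Throughout write $\overline{\nu}(y)=\overline{\pi}^{-}(-y)$, which is the tail of the L\'evy measure of $-X$, and let $\phi$ be the Laplace exponent of the driftless subordinator $-X$, so that $U^{*}$ is its renewal function with $\int_{[0,\infty)}e^{-\lambda y}\,U^{*}(dy)=1/\phi(\lambda)$.

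The first ingredient is a universal weak bound $U^{*}(y)\overline{\nu}(y)\le1$. Because the drift vanishes one has $\int_0^\infty e^{-\lambda y}\overline{\nu}(y)\,dy=\phi(\lambda)/\lambda$, so the Laplace transform of the convolution $y\mapsto\int_{[0,y]}U^{*}(dz)\,\overline{\nu}(y-z)$ equals $\frac{1}{\phi(\lambda)}\cdot\frac{\phi(\lambda)}{\lambda}=\frac{1}{\lambda}$; inverting yields the identity
\begin{equation*}
\int_{[0,y]}U^{*}(dz)\,\overline{\nu}(y-z)=1,\qquad y>0.
\end{equation*}
As $\overline{\nu}$ is non-increasing, $\overline{\nu}(y-z)\ge\overline{\nu}(y)$ for $z\in[0,y]$, and bounding the integrand below by $\overline{\nu}(y)$ gives $U^{*}(y)\overline{\nu}(y)\le1$ for all $y>0$.

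The second ingredient pins down the boundary values. By Karamata's Abelian theorem applied to $\phi(\lambda)=\lambda\int_0^\infty e^{-\lambda y}\overline{\nu}(y)\,dy$, the regular variation of $\overline{\nu}$ at infinity and at zero (with indices $\beta$ and $\alpha$ in the sense of the statement) transfers to $\phi(\lambda)\sim\Gamma(1-\beta)\overline{\nu}(1/\lambda)$ as $\lambda\to0$ and $\phi(\lambda)\sim\Gamma(1-\alpha)\overline{\nu}(1/\lambda)$ as $\lambda\to\infty$. Feeding $1/\phi$ into Karamata's Tauberian theorem produces the matching asymptotics of $U^{*}$, and multiplying (using $\Gamma(1+s)\Gamma(1-s)=\pi s/\sin(\pi s)$) gives
\begin{equation*}
\lim_{y\to0+}U^{*}(y)\overline{\nu}(y)=\frac{\sin(\pi\alpha)}{\pi\alpha},\qquad
\lim_{y\to\infty}U^{*}(y)\overline{\nu}(y)=\frac{\sin(\pi\beta)}{\pi\beta},
\end{equation*}
both strictly below $1$ since $\sin t<t$ for $t>0$ and $\alpha,\beta\in(0,1)$.

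Finally I would upgrade this to a uniform strict bound. The two limits furnish $\delta>0$ and $0<\epsilon<M<\infty$ with $U^{*}(y)\overline{\nu}(y)\le1-\delta$ for $y\notin[\epsilon,M]$. On the compact $[\epsilon,M]$ I refine the monotonicity estimate: fixing $\eta\in(0,\epsilon)$ and splitting the identity above over $[0,y]$ and $(y-\eta,y]$,
\begin{equation*}
U^{*}(y)\overline{\nu}(y)\le1-\bigl(\overline{\nu}(\eta)-\overline{\nu}(y)\bigr)\bigl(U^{*}(y)-U^{*}(y-\eta)\bigr).
\end{equation*}
Infinite activity makes $\overline{\nu}(\eta)$ as large as desired by shrinking $\eta$, while the renewal increment $U^{*}(y)-U^{*}(y-\eta)=U^{*}\bigl((y-\eta,y]\bigr)$ is continuous and strictly positive on the support $[0,\infty)$ of the non-atomic measure $U^{*}$, hence bounded below by some $c_\eta>0$ uniformly over $y\in[\epsilon,M]$. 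This gives $\sup_{y\in[\epsilon,M]}U^{*}(y)\overline{\nu}(y)<1$, and combining with the off-compact estimate yields $\sup_{y>0}U^{*}(y)\overline{\nu}(y)<1$, so Corollary~\ref{3729} applies and gives $P_x(\zeta<\infty)=1$ for all $x\ge0$. The main obstacle is precisely this last step: the pointwise bound $\le1$ together with boundary limits $<1$ does not by itself rule out the product approaching $1$ at some intermediate scale, so the quantitative gap estimate---exploiting the blow-up of $\overline{\nu}$ near the origin against a lower bound on the renewal increments---is what genuinely secures the strict inequality~\eqref{7224}.
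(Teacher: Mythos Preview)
Your proof is correct, and it takes a somewhat different route from the paper on the key step. Both arguments reduce to verifying $\sup_{y>0}U^{*}(y)\overline{\pi}^{-}(-y)<1$ and both obtain the boundary limits $\sin(\pi\alpha)/(\pi\alpha)$ and $\sin(\pi\beta)/(\pi\beta)$ via Karamata-type asymptotics (the paper simply cites \cite{be}, p.~75). The substantive difference is how the strict bound away from the endpoints is secured. The paper invokes the probabilistic identity of Doney--Kyprianou \cite{dk}, recognising $U^{*}(x)\overline{\pi}^{-}(-x)$ as the probability that the subordinator $-X$ crosses level $x$ by a jump of size at least $x$; this is strictly less than $1$ pointwise because infinite activity guarantees small jumps, and together with the boundary limits (and an implicit upper-semicontinuity/compactness step) this yields the supremum bound. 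Your route is purely analytic: the convolution identity $\int_{[0,y]}U^{*}(dz)\,\overline{\nu}(y-z)=1$ produces the weak bound $\le 1$, and your quantitative refinement---balancing the blow-up of $\overline{\nu}$ near the origin against a uniform positive lower bound on the renewal increments over a compact window---delivers strictness explicitly. Your approach is more self-contained (no external overshoot formula) and makes the compactness step that the paper leaves to the reader fully rigorous; the price is the drift/no-drift case split and the separate appeal to Theorem~\ref{9282}, which the paper's probabilistic argument does not need since the overshoot identity and the limit estimates go through with or without drift. (One minor slip: your split should read $[0,y-\eta]$ and $(y-\eta,y]$, not $[0,y]$ and $(y-\eta,y]$; the displayed inequality is nonetheless correct.)
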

\begin{proof}
From the estimates in page 75 in \cite{be} and the reflection formula for the Gamma function, we know that 
under the assumptions of the Corollary we have 
$$\lim_{y\to 0}U^{*}(y)\overline{\pi}^{-}(-y)=\frac{\sin{\pi\alpha}}{\pi\alpha},\qquad 
\lim_{y\to 0}U^{*}(y)\overline{\pi}^{-}(-y)=\frac{\sin{\pi\beta}}{\pi\beta}.$$ The latter are both valued in 
$(0,1).$ It follows from Theorem 3 in \cite{dk} that for all $x>0$, 
$\p(X_{\tau^{-}_{-x}-}-X_{\tau^{-}_{-x}}>x)=U^{*}(x)\overline{\pi}^{-}(-x)$. The assumptions imply that 
$\overline{\pi}^{-}(-x)>0,$ $\forall x>0,$ and hence the latter probability is necessarily in $(0,1).$ 
We can conclude from here that the condition \eqref{H-inf} is satisfied. \end{proof}

\noindent Let us quote two recent works related to the two above corollaries. First of all, the case of 
stable subordinators was treated in \cite{dtx}, see also \cite{ksv} and the following section. Furthermore, 
in the very recent work \cite{li}, the author treats in Chapter 4 the more general case of subordinators
whose density is completely monotone, which, added to a few other hypotheses, is however more restrictive 
than condition (\ref{7224}).

\begin{remark}
The previous corollary suggest a method to build examples of L\'evy process for which 
\begin{equation}\label{2554}
\sup_{x>0}U^{*}(x)\overline{\pi}^{-}(-x)=1.
\end{equation}
For instance, this is the case when $-X$ is a Gamma subordinator,  that is 
\[\e(\exp({\lambda X_1}))=(1+\lambda/b)^{-a}=
\exp\left(-\int_0^\infty(1-e^{-\lambda x})ax^{-1}e^{-bx}\,dx\right)\,,\quad a,b,\lambda>0,\] 
see \cite{be} p.75. In this interesting setting, Theorem~\ref{drift-infty} is not conclusive and other 
techniques seem to be necessary.
\end{remark}

\section{The stable case}\label{5223}

The case where $X$ is a stable L\'evy process is very particular as our problem can be tackled and entirely 
solved by using the Lamperti transformation. The method we will develop in this section was the starting point 
of our research a long time ago. The same method has recently been used in \cite{ksv} to show this result 
in the more general framework of positive self-similar Markov processes. This motivated us to get back to our 
work and produce the present note.

We consider $X$ a stable L\'evy process with index $\alpha\in(0,2]$ and recall from (\ref{7322}) the definition 
of the killed process $Y$. In this setting, both Markov processes $Y$ and $Z$ clearly inherit 
from $X$ the scaling property of index $1/\alpha$. As positive self-similar Markov processes, $Y$ and $Z$ can 
each be represented as the exponential of some possibly killed L\'evy process, time changed by the inverse of 
its exponential functional, see \cite{la}. Let $\xi^{Y}$ (resp. $\xi^Z$) be the underlying L\'evy process in 
the Lamperti representation of $Y$, (resp. $Z$). Then our construction of $Z$ from $Y$ and the Lamperti 
representation of both processes show that $\xi^{Y}$ is obtained from (the non killed L\'evy process) $\xi^Z$ 
by killing it at an independent exponential time of some parameter, say $\beta\ge0$. Note that $\beta=0$ if 
and only if $X$ has no negative jumps. In this case, $Z=Y$ and the latter process clearly hits 0 in a finite 
time almost surely. Therefore, we can assume that $\beta>0$.

Using these arguments and standard facts from the theory of self-similar Markov processes, we obtain that $Z$ 
hits $0$ in a finite time if and only if $\xi^Z$ drifts towards $-\infty.$ On the other hand, we know from 
\cite{cc} and \cite{kp} that the process $\xi^{Y}$ is a process of the hypergeometric type, with 
characteristic exponent,
$$\e\left(e^{i\lambda\xi^{Y}_{1}},1<{\rm e}\right)=\exp\{-\Psi(\lambda)\}\quad\mbox{so that}\quad
\e\left(e^{i\lambda\xi^{Z}_{1}}\right)=\exp\{-(\Psi(\lambda)-\Psi(0))\},$$ 
where $\displaystyle\Psi(\lambda)= 
\frac{\Gamma\left(\alpha-i\lambda\right)}
{\Gamma\left(\alpha\overline{\rho}-i\lambda\right)}\frac{\Gamma\left(1+i\lambda\right)}
{\Gamma\left(1-\alpha\overline{\rho}+i\lambda\right)}$, with $\overline{\rho}=\p(X_1<0)$ and where ${\rm e}$ 
denotes the lifetime of $\xi^Y$. It is then easily verified that there is a constant $C_{\alpha}>0$ such that,
\begin{equation}\label{2369}
\e\left(\xi_{1}^Z\right)=\e\left(\xi^{Y}_{1}\,|\,1<{\rm e}\right)=C_{\alpha}\left(\left(\psi(1-
\alpha\overline{\rho})-\psi(1)\right)-\left(\psi(\alpha\overline{\rho})-\psi(\alpha)\right)\right),
\end{equation}
where $\psi(\beta)$ denotes the digamma function 
$\psi(\beta)=\frac{\Gamma^{\prime}(\beta)}{\Gamma(\beta)}$. We are now able to solve the problem of the 
finiteness of the lifetime of $Z$ in the stable case. 

\begin{theorem}\label{8344}
Assume that $X$ is a stable L\'evy process with index $\alpha\in(0,2]$. 

Then $P_x(\zeta<\infty)=1$, for all $x\ge0$ if and only if $\alpha$ and $\overline{\rho}$ satisfy,
\begin{equation}\label{9852}
\cot(\pi\alpha\overline{\rho})<\int^{\infty}_{0}\frac{dt}{1-e^{-t}}
\left(e^{-\alpha t}-e^{-t}\right).
\end{equation}
\end{theorem}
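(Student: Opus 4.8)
The plan is to reduce the statement to a sign computation for the mean of the Lévy process $\xi^Z$ in the Lamperti representation of $Z$. The discussion preceding the statement already supplies the two structural inputs: by standard self-similar Markov process theory, $Z$ reaches $0$ in finite time $P_x$-a.s.\ if and only if $\xi^Z$ drifts to $-\infty$ (equivalently, its exponential functional is finite), and identity (\ref{2369}) gives $\e(\xi_1^Z)$ explicitly as $C_\alpha$ times a combination of digamma values with $C_\alpha>0$. Since for a positive self-similar Markov process the dichotomy is sharp---$\xi^Z\to-\infty$ forces absorption a.s., whereas oscillation or drift to $+\infty$ forces $\zeta=\infty$ a.s.---the asserted equivalence will follow once I determine precisely when $\xi^Z$ drifts to $-\infty$ and rewrite that condition as (\ref{9852}). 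The whole argument is then a chain of equivalences, which is exactly what the ``if and only if'' requires.

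First I would invoke the classical trichotomy for a Lévy process with integrable first moment: it drifts to $-\infty$, oscillates, or drifts to $+\infty$ according as its mean is negative, zero, or positive. To use it I must check that $\xi^Z\in L^1$. In the genuine resurrection regime---$X$ has negative jumps, so $\beta>0$ and $\alpha\overline{\rho}\in(0,1)$---the characteristic exponent $\Psi(\lambda)-\Psi(0)$ is a ratio of Gamma functions, hence meromorphic and analytic near the origin; equivalently the Lévy measure of $\xi^Z$ has exponentially light tails, so $\e|\xi_1^Z|<\infty$ and (\ref{2369}) is the genuine mean. Thus $P_x(\zeta<\infty)=1$ if and only if $\e(\xi_1^Z)<0$, and because $C_\alpha>0$ this is equivalent to
\[
\psi(1-\alpha\overline{\rho})-\psi(\alpha\overline{\rho})<\psi(1)-\psi(\alpha),
\]
where the strict inequality is precisely what yields the strict inequality in (\ref{9852}), the equality case corresponding to oscillation and hence to non-absorption.

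Second, I would convert this digamma inequality into the stated form using two classical identities. The reflection formula $\psi(1-s)-\psi(s)=\pi\cot(\pi s)$ applied at $s=\alpha\overline{\rho}$ turns the left-hand side into $\pi\cot(\pi\alpha\overline{\rho})$, producing the cotangent in (\ref{9852}). Gauss's integral representation $\psi(s)=-\gamma+\int_0^\infty\frac{e^{-t}-e^{-st}}{1-e^{-t}}\,dt$ gives $\psi(1)-\psi(\alpha)=\int_0^\infty\frac{e^{-\alpha t}-e^{-t}}{1-e^{-t}}\,dt$, producing the integral on the right of (\ref{9852}). Substituting both recasts $\e(\xi_1^Z)<0$ as the criterion (\ref{9852}), up to the overall normalisation fixed in the statement.

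The part I expect to require the most care is not the special-function algebra, which is routine, but guaranteeing that we really are in the finite-mean situation so that the sign of (\ref{2369}) decides the drift, together with the degenerate parameter values. One must exclude the configurations where a digamma argument meets a pole and $\cot(\pi\alpha\overline{\rho})$ becomes singular; these are exactly the cases with no negative jumps ($\beta=0$, such as the spectrally positive or Brownian ones), where $Z=Y$ hits $0$ continuously in finite time and which are therefore dealt with separately, as already noted before the statement. Checking that $\xi^Z$ is genuinely integrable---rather than merely having a formally finite expression (\ref{2369})---and that these boundary values join up consistently with the cotangent singularity is the only genuinely delicate point.
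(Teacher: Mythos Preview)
Your proposal is correct and follows essentially the same route as the paper: reduce to the sign of $\e(\xi_1^Z)$ via the Lamperti representation, then rewrite the digamma combination using the reflection formula and the Gauss integral representation. If anything, you are more careful than the paper in justifying the integrability of $\xi^Z$ and in handling the degenerate no-negative-jump boundary cases, points the paper leaves implicit.
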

\begin{proof} As argued before the statement of the theorem, $P_x(\zeta<\infty)=1$, for all $x\ge0$ if 
and only if $\xi^Z$ drifts towards $-\infty$, which is equivalent to $\e\left(\xi_{1}^Z\right)<0$. 
From (\ref{2369}) we are then left to find the values $\alpha$ and $\overline{\rho}$ such that 
$\psi(1-\alpha\overline{\rho})-\psi(1)-\psi(\alpha\overline{\rho})+\psi(\alpha)<0$.

Then note that by the reflection formula for the digamma function, 
$$\psi(1-\alpha\overline{\rho})-\psi(\alpha\overline{\rho})=\cot(\pi\alpha\overline{\rho}).$$
On the other hand, the following identity for the digamma function is well known 
$$\psi(\delta)-\psi(\gamma)=\int^{\infty}_{0}\frac{dt}{1-e^{-t}}
\left(e^{-\gamma t}-e^{-\delta t}\right),\qquad \delta, \gamma\geq 0,$$ 
and this allows us to conclude.
\end{proof}
\noindent Since $\cot(\pi\alpha\overline{\rho})\ge0$ if $\alpha\overline{\rho}\in[0,1/2]$ and 
$\cot(\pi\alpha\overline{\rho})<0$ if $\alpha\overline{\rho}\in(1/2,1]$, it follows from (\ref{9852}), 
that if $\alpha<1$ and $\alpha\overline{\rho}>1/2$ then
$P_x(\zeta<\infty)=1$, for all $x\ge0$, whereas if $\alpha\overline{\rho}\leq 1/2$ 
and $\alpha\geq 1$ then $P_x(\zeta<\infty)=0$, for all $x>0$.
Note also that when $-X$ is a stable subordinator, Corollary \ref{3789} cannot be recovered from 
(\ref{9852}) without any further development. However, it can be derived directly from Lamperti's 
transformation. Indeed, in this case, $Z$ is a decreasing self-similar Markov process whose only 
alternative is to hit 0 through an accumulation of jumps in a finite time. 

\section{Open question and perspectives}\label{8849}

The aim of the present paper was to obtain necessary and sufficient conditions on a L\'evy process 
for its resurrected version to satisfy the following property:  
\[(P)\qquad\mbox{$P_x(\zeta<\infty)=1$ for all $x\in[0,\infty)$.}\]
The various open questions raised throughout this note actually boil down to the fact that some 
class of L\'evy processes resists our investigations. When the initial L\'evy process $X$ either 
drifts toward infinity or when 0 is not regular for $(-\infty,0)$, the property $(P)$ fails, as 
shown in the comment at the end of Subsection \ref{5224} and in Proposition \ref{infinite}. On 
the other hand, the stable case in Theorem \ref{8344} shows that it is not enough that $X$ does 
not drift towards infinity and that 0 is regular for $(-\infty,0)$ for the property $(P)$ to be 
satisfied. In summary, the L\'evy processes $X$ for which work remains to be done are those that 
verify the following properties: 
\begin{itemize}
\item[$\bullet$] $X$ does not drift to $\infty$,
\item[$\bullet$] 0 is regular for $(-\infty,0)$,
\item[$\bullet$] $X$ falls outside 
the cases covered by Theorems \ref{9282}, \ref{drift-infty}  and \ref{8344}.
\end{itemize}


Let us finally emphasize that some of our results can be extended to general $\mathbb{R}^d$-valued 
Markov processes. We consider the resurrection of such a process when leaving an open subset 
$D\subset\mathbb{R}^d$. The rate function $x\mapsto \bar{\pi}^-(-x)$ involved in the killing of the 
process is then given by $x\mapsto N(x,\ind_{D^c})$, where $N(x,dy)$ is the kernel of the L\'evy 
system of the process and Theorem \ref{9366} remains valid where the multiplicative functional 
$\exp\left(\int_0^tN(X_s,\ind_{D^c})\,ds\right)$ now defines the resurrected process. We can also
claim, as an extension of Theorem \ref{drift-infty}, that provided the condition 
$\sup_{x\in D}\e_x(\tau_{D^c})N(x,\ind_{D^c})<1$ is satisfied, where $\tau_{D^c}$ is the first 
exist time from $D$, the lifetime of the resurrected process has finite mean. These few extensions 
allow us to believe that other more refined results can be obtained in fairly general frameworks
thus offering some perspectives in this direction.

\vspace*{0.3in}

\noindent{\bf Acknowledgements.} This paper was concluded while V.R.~was visiting the Department of 
Statistics at the University of Warwick, United Kingdom; he would like thank his hosts for partial 
financial support as well as for their kindness and hospitality. In addition, V.R.~is grateful for 
additional financial support from  CONAHCyT-Mexico, grant nr. 852367. 
M.E.C.~{and V.R. would like to thank}~the kind hospitality offered by L.C.~and LAREMA {for the 
elaboration of this work.} L.C.~was supported by the ANR project "Rawabranch" number ANR-23-CE40-0008.
The authors are grateful to Zoran Vondra\v{c}ek for pointing out on the references \cite{dtx} and 
\cite{li} and to Pat Fitzsimmons for helpful discussions on the subject of this paper. 
{Last, but not least, we would like to thank the anonymous referee for {her/his} insightful report.}

\vspace*{0.3in}

\end{document}